\documentclass[reqno, 11pt]{amsart}
\usepackage{placeins}

\usepackage[utf8]{inputenc}
\usepackage[T1]{fontenc}

\usepackage{amsfonts,amssymb,enumerate,amsthm,amsmath,color,graphicx,mathtools}
\usepackage{hyperref}
\usepackage[all]{xy}

\usepackage[a4paper,margin=4cm,includehead,includefoot,headsep=12pt,footskip=18pt]{geometry}

\newtheorem{theor}{Theorem}[section]
\newtheorem{theorem}[theor]{Theorem}
\newtheorem{prop}[theor]{Proposition}

\newtheorem{lem}[theor]{Lemma}
\newtheorem{lemma}[theor]{Lemma}

\newtheorem{remark}{Remark}

\newcommand{\B}{\mathbb{B}}

\newcommand{\C}{\mathbb{C}}

\newcommand{\rank}{\operatorname {rank}}

\newcommand{\W}{\Omega}

\newcommand{\ov}[1]{\overline{#1}}

\newcommand{\K}{K\"ahler}

\newcommand{\Aut}{\operatorname {Aut}}

\newcommand{\Span}{\operatorname {span}}


\begin{document}

\title[The polydisk theorem for Hartogs domains]{The polydisk theorem for Hartogs domains over  symmetric domains}

\author{Andrea Loi}
\address{(Andrea Loi) Dipartimento di Matematica \\
         Universit\`a di Cagliari (Italy)}
         \email{loi@unica.it}

\author{Roberto Mossa}
\address{(Roberto Mossa) Dipartimento di Matematica \\
         Universit\`a di Cagliari (Italy)}
        \email{roberto.mossa@unica.it}
        
        \author{Fabio Zuddas}
\address{(Fabio Zuddas) Dipartimento di Matematica \\
         Universit\`a di Cagliari (Italy)}
        \email{fabio.zuddas@unica.it}

\thanks{
The authors are supported by INdAM and  GNSAGA - Gruppo Nazionale per le Strutture Algebriche, Geometriche e le loro Applicazioni, by GOACT - Funded by Fondazione di Sardegna and 
partially funded by PNRR e.INS Ecosystem of Innovation for Next Generation Sardinia (CUP F53C22000430001, codice MUR ECS00000038).}

\subjclass[2000]{53C55, 32Q15, 53C24, 53C42} 
\keywords{\K\ \ metrics, Cartan domain; exceptional domain;  polydisk theorem; Hartogs domain over bounded symmetric domain; dual Hartogs domain}

\begin{abstract}
We extend the polydisk theorem of \cite{MOSSAZEDDA2022polch}, originally established for classical Cartan-Hartogs domains, to Hartogs domains over arbitrary (possibly reducible and exceptional) bounded symmetric domains. We further establish a dual counterpart of this result. As an application, we show that the dual of a Hartogs domain over a bounded symmetric domain admits no totally geodesic immersion into any compact Riemannian manifold, thereby broadening the rigidity phenomena obtained in \cite{LOIMOSSAZUDDAS2024dualemb}.\end{abstract}
 
\maketitle

\tableofcontents
\section{Introduction}

Let $\Omega\subset\mathbb{C}^n$ be a bounded symmetric domain endowed with its hyperbolic metric $g_{\Omega}$, whose associated \K\  form is
$$
\omega_{\Omega}
\;=\;
-\tfrac{i}{2}\,\partial\bar\partial \log N_{\Omega}(z,\bar z),
$$
where $N_{\Omega}(z,\bar z)$ denotes the generic norm of the Hermitian positive Jordan triple system associated to $\Omega$ (see below).
The \emph{Hartogs domain over $\Omega$} is defined by
\[
M_{\Omega,\mu}
=\bigl\{(z_{0},z)\in\mathbb{C}\times\Omega
\;\big|\;
|z_{0}|^{2}<N_{\Omega}^\mu(z,\bar z)\bigr\}.
\]
This is also known as a {\em generalized Cartan--Hartogs domain}. 
A Cartan--Hartogs domain is a Hartogs domain whose base $\Omega$ is irreducible (i.e., a Cartan domain). 
It is known that $M_{\Omega,\mu}$ is homogeneous if and only if it is the unit ball $\B^{n+1}$, 
equivalently when $\Omega=\B^n$ and $\mu=1$.

On $M_{\Omega,\mu}$ one considers the \K\   metric $g_{\Omega, \mu}$ whose associated \K\ form is given by:
\begin{equation}\label{roosmetric}
\omega_{\Omega, \mu}
=-\frac{i}{2}\,\partial\bar\partial
\log\ \!\Bigl(N_{\Omega}^\mu(z,\bar z)-|z_{0}|^{2}\Bigr).
\end{equation}

The \emph{dual Hartogs domain over $\Omega$} is $\mathbb{C}^{n+1}$ equipped with the \emph{dual \K\ metric} whose associated \K\  form is:
\begin{equation}\label{roosmetricdual}
\omega^{*}_{\Omega, \mu}
=\frac{i}{2}\,\partial\bar\partial
\log\ \!\Bigl(N_{\Omega}^\mu(z,-\bar z)+|z_{0}|^{2}\Bigr),
\end{equation}
and the dual bounded symmetric domain is $\C^n$
with the dual \K\ metric $g_{\Omega}^*$ whose associated
\K\ form is
$$
\omega^*_{\Omega}
\;=\;
\tfrac{i}{2}\,\partial\bar\partial \log N_{\Omega}(z,-\bar z).
$$
Hartogs domains over bounded symmetric domains, equipped with the \K\ metric~\eqref{roosmetric} and its dual~\eqref{roosmetricdual}, form a tractable yet genuinely nonhomogeneous class of \K\ manifolds; 
In this framework, \emph{balanced metrics} (in Donaldson's sense \cite{Donaldson2001}) are completely understood: on Cartan domains they are fully classified, and among Cartan--Hartogs domains the complex hyperbolic space is the only one admitting a balanced metric. 
Beyond this, the family offers a convenient testing ground for explicit K\"ahler--Einstein geometry: existence and closed formulas for the K\"ahler--Einstein metric are available \cite{WangWangZhang2009,DengXiaoYang2011}, moreover  the K\"ahler--Einstein and Bergman metrics are equivalent \cite{YinZhang2008,ZhaoLin2008}, and canonical metrics on Cartan--Hartogs domains have been analyzed in \cite{Zedda2012}. 
Finally, holomorphic isometric immersions into the infinite-dimensional complex projective space $\mathbb{C}P^\infty$ are constructed, yielding the first examples  of  nonhomogeneous and complete K\"ahler--Einstein
submanifolds of $\mathbb{C}P^\infty$\cite{LoiZedda2011,HaoWangZhang2015,LoiZeddaBook2018}.

\vskip 0.3cm

We now recall two further definitions needed for our main results: the \emph{Hartogs-Polydisk} and its dual.  First, the \emph{polydisk of dimension $r$} is
\[
\Delta^{r}
=\bigl\{\,z=(z_{1},\dots,z_{r})\in\mathbb{C}^{r}
\;\big|\;
|z_{j}|^{2}<1,\;j=1,\dots,r\bigr\},
\]
whose generic norm is
\[
N_{\Delta^{r}}(z,\bar z)
=\prod_{j=1}^{r}\bigl(1-|z_{j}|^{2}\bigr).
\]
Hence the \K\ metric is the product of hyperbolic metrics and its associated \K\  form is
\[
\omega_{\Delta^{r}}
=-\frac{i}{2}\,\partial\bar\partial
\log\!\prod_{j=1}^{r}(1-|z_{j}|^{2}).
\]
In particular, $(\Delta^{r},\omega_{\Delta^{r}})$ is homorphically isometric to $(\C H^1,\omega_{\C H^1})^r$.
Setting $\Omega=\Delta^{r}$ in the above definitions yields the \emph{Hartogs–Polydisk}:
\[
M_{\Delta^{r},\mu}
=\bigl\{(z_{0},z)\in\mathbb{C}\times\Delta^{r}
\;\big|\;
|z_{0}|^{2}<\prod_{j=1}^{r}(1-|z_{j}|^{2})^{\mu}\bigr\},
\]
with associated Kähler form
\[
\omega_{\Delta^{r},\mu}
=-\frac{i}{2}\,\partial\bar\partial
\log\!\Bigl(\prod_{j=1}^{r}(1-|z_{j}|^{2})^{\mu}-|z_{0}|^{2}\Bigr),
\]

Finally, the \emph{dual Hartogs–Polydisk} is $\mathbb{C}^{r+1}$  endowed with the \K\ metric whose associated \K\ form is
\[
\omega^{*}_{\Delta^{r},\mu}
=\frac{i}{2}\,\partial\bar\partial
\log\!\Bigl(\prod_{j=1}^{r}(1+|z_{j}|^{2})^{\mu}+|z_{0}|^{2}\Bigr),
\]

We present now the two  main theorems established in this paper, which provide essential structural results about totally geodesic embeddings of polydisks into Hartogs domains over bounded symmetric domains and their duals.

\begin{theorem}\label{THMhpoly}
Let \(\Omega\subset\mathbb{C}^n\) be a bounded symmetric domain of rank \(r\).
Then for every point \(p\in M_{\Omega,\mu}\) and every vector \(X\in T_{p}M_{\Omega,\mu}\),
there exist  totally geodesic holomorphic  embeddings
\[j \colon \Delta^{r}\hookrightarrow\Omega, \  \widetilde{j}\colon M_{\Delta^{r},\mu}\longrightarrow M_{\Omega,\mu}\] 
such that  $\widetilde{j}$ is an extension of $j$, 
\(\widetilde{j}(q)=p\) and
\((d\widetilde{j})_{q}(V)=X\) for some \(q\in M_{\Delta^{r},\mu}\) and \(V\in T_qM_{\Delta^{r},\mu}\).
\end{theorem}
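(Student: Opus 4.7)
The plan is to build $\tilde j$ in two stages: first produce a canonical totally geodesic embedding at the base point $(0,0)$, then translate it to the desired $(p,X)$ via automorphisms of $M_{\Omega,\mu}$.

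\textbf{Canonical lift at the origin and isometric property.} The classical polydisk theorem for bounded symmetric domains, applied to a frame $e_1,\dots,e_r$ of orthogonal tripotents in the Hermitian Jordan triple system of $\Omega$, yields a totally geodesic holomorphic embedding $j_0\colon \Delta^r\hookrightarrow\Omega$, $j_0(w)=\sum_\alpha w_\alpha e_\alpha$, with the key norm identity $N_\Omega(j_0(w),\overline{j_0(w)})=N_{\Delta^r}(w,\bar w)$. I define $\tilde j_0\colon M_{\Delta^r,\mu}\to M_{\Omega,\mu}$ by $\tilde j_0(w_0,w)=(w_0,j_0(w))$: the norm identity makes this a well-defined holomorphic embedding and gives $\tilde j_0^{\,*}\omega_{\Omega,\mu}=\omega_{\Delta^r,\mu}$, so $\tilde j_0$ is \K-isometric. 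To upgrade to totally geodesic I would identify the image as a common fixed set of holomorphic isometries: let $H\subset K$ be the subgroup of the isotropy group of $\Omega$ at $0$ pointwise fixing $j_0(\Delta^r)$; the Jordan/Peirce-decomposition description of the $K$-action then gives $\mathrm{Fix}_\Omega(H)=j_0(\Delta^r)$. Since each $h\in H$ lies in $K$ it preserves $N_\Omega$, so the extension $\tilde h(z_0,z)=(z_0,h(z))$ preserves the \K\ potential $-\log(N_\Omega^\mu-|z_0|^2)$ and is a holomorphic isometry of $M_{\Omega,\mu}$; the common fixed set of $\{\tilde h:h\in H\}$ in $M_{\Omega,\mu}$ equals $\tilde j_0(M_{\Delta^r,\mu})$, which is therefore totally geodesic.

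\textbf{Translation to arbitrary $(p,X)$.} For $p=(z_0^*,z^*)$ and $X=(X_0,X_z)\in T_pM_{\Omega,\mu}$, transitivity of $\Aut(\Omega)$ on $\Omega$ supplies $\phi\in\Aut(\Omega)$ with $\phi(0)=z^*$; the standard cocycle construction $\tilde\phi(z_0,z)=(z_0\,h_\phi(z)^{-\mu},\phi(z))$, with $h_\phi$ the transformation factor of $N_\Omega$ under $\phi$, extends $\phi$ to a holomorphic isometry of $M_{\Omega,\mu}$. Set $(z_0',0):=\tilde\phi^{-1}(p)$ and $(X_0',X_z'):=d\tilde\phi^{-1}(X)$. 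The tangent-space form of the polydisk theorem for $\Omega$ (spectral decomposition via tripotents) supplies $k\in K$ with $dk^{-1}(X_z')\in(dj_0)_0(T_0\Delta^r)$; writing this as $(dj_0)_0(Y)$ and setting $q:=(z_0',0)\in M_{\Delta^r,\mu}$, $V:=(X_0',Y)\in T_qM_{\Delta^r,\mu}$, the maps $\tilde j:=\tilde\phi\circ\tilde k\circ\tilde j_0$ and $j:=\phi\circ k\circ j_0$ (with $\tilde k(z_0,z)=(z_0,k(z))$) satisfy all requirements: they are totally geodesic holomorphic embeddings (a totally geodesic embedding postcomposed with holomorphic isometries), $\tilde j$ extends $j$ because each factor preserves the natural inclusion $\{w_0=0\}\hookrightarrow M_{\Delta^r,\mu}$, $\{z_0=0\}\hookrightarrow M_{\Omega,\mu}$, and $\tilde j(q)=p$, $(d\tilde j)_q(V)=X$ follow by direct verification.

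\textbf{Main obstacle.} The delicate point is the totally geodesic claim for $\tilde j_0$: the isometric property is immediate from the norm identity, but the totally geodesic upgrade requires identifying the image as a common fixed set of isometries, which rests on pinning down exactly which subgroup $H\subset K$ pointwise fixes the polydisk $j_0(\Delta^r)$. This needs the Peirce decomposition of $\Omega$ together with the explicit $K$-action on each Peirce subspace; in the reducible and exceptional cases that motivate the present paper, this is where the argument requires genuinely new input beyond the classical Cartan--Hartogs setting of \cite{MOSSAZEDDA2022polch}.
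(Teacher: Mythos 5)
Your overall architecture mirrors the paper's: establish the result for a canonical polydisk through the origin, then propagate to arbitrary $(p,X)$ by lifting automorphisms of $\Omega$ to $M_{\Omega,\mu}$ via the transformation factor of $N_\Omega$ (this is exactly Lemma~\ref{LEMliftgen} and the transitivity part of Proposition~\ref{PROPPolyDisk2}), and your translation step, including the cocycle lift and the use of the tangent-space form of the classical polydisk theorem, is sound. The gap is precisely where you flag it, and it is a genuine one: the totally geodesic property of $\tilde j_0$ at the origin. Your proposed route --- realize $\tilde j_0(M_{\Delta^r,\mu})$ as the common fixed-point set of the lifted isometries $\tilde h(z_0,z)=(z_0,h(z))$, $h\in H$ --- is viable in principle (the fixed-point set of a family of isometries is totally geodesic, and $\mathrm{Fix}(\{\tilde h\})=(\C\times\mathrm{Fix}_\Omega(H))\cap M_{\Omega,\mu}$), but it hinges entirely on the identity $\mathrm{Fix}_\Omega(H)=j_0(\Delta^r)$, which you assert rather than prove. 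Establishing this requires the joint Peirce decomposition relative to the frame and an explicit description of how the pointwise stabilizer acts on each Peirce space; for the exceptional domains $\Omega_V$, $\Omega_{VI}$ (and for reducible products) this is exactly the content that must be supplied and is comparable in difficulty to what it replaces. As written, the proof is incomplete at its central step.

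For comparison, the paper avoids the fixed-point argument altogether and works directly with the metric: Lemma~\ref{lemtotgroos} shows by explicit computation with the Jordan triple product and the Freudenthal adjoint (formulas \eqref{EQadjVI}--\eqref{EQgennormVI}, and the new Propositions~\ref{PROPpolymaxVI}, \ref{PROPpolymaxV} for the exceptional cases) that $\partial N_\Omega/\partial\bar z_k$ and $\partial^2 N_\Omega/\partial z_j\partial\bar z_k$ vanish along $\Pi$ for mixed indices, whence the full matrix $(g_{j\bar k})$ on $M_{\Omega,\mu}$ is block-diagonal along $\C\times\Pi$ \emph{together with} the vanishing of $\partial g_{j\bar k}/\partial z_\ell$ needed to kill the Christoffel symbols $\Gamma^t_{\ell j}$ (Proposition~\ref{PROPPolyDisk1}). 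Note that this is strictly more information than the restriction identity $N_\Omega\circ j_0=N_{\Delta^r}$ that gives you the isometric property: it concerns normal derivatives of $N_\Omega$ off the polydisk, which is why the isometric statement alone cannot be "upgraded" to total geodesy without further input. If you want to salvage your route, you must either prove $\mathrm{Fix}_\Omega(H)=j_0(\Delta^r)$ case by case (including $\Omega_V$, $\Omega_{VI}$), or replace it with the derivative computations the paper carries out.
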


\begin{theorem}\label{THMhpolydual}
Let $\Omega \subset \mathbb{C}^n$ be a bounded symmetric domain of rank $r$.
For every $X \in T_{(z_0, 0)}\mathbb{C}^{\,n+1}$, there exists a  totally geodesic holomorphic embedding
\[
j: \bigl(\mathbb{C}^r,\, g_{\Delta^r}^*\bigr)\hookrightarrow \bigl(\mathbb{C}^n,\, g_{\Omega}^*\bigr)
\]
with $j(0)=0$ such that  the induced holomorphic embedding
\[
\widetilde{j}\colon \mathbb{C}^{r+1}\longrightarrow \mathbb{C}^{n+1},\qquad
\widetilde{j}(z_0,z)=(z_0, j(z)),
\]
is totally geodesic and
$
(d\widetilde{j})_{(z_0,0)}(V_0)=X,
$
for some $V_0\in T_{(z_0,0)}\mathbb{C}^{\,r+1}$. 
\end{theorem}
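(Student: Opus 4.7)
The strategy is to reduce to the dual polydisk theorem for $\Omega$ at the base level, then extend trivially in the $z_0$-coordinate, and finally verify that this extension remains totally geodesic. Write $X=X_{0}\,\partial_{z_{0}}+X'$ with $X'\in T_{0}\mathbb{C}^{n}$ the base component. If $\{e_{1},\dots,e_{r}\}$ is a Jordan frame of the Hermitian positive Jordan triple system associated to $\Omega$, then $j_{0}(z_{1},\dots,z_{r})=\sum_{i=1}^{r}z_{i}e_{i}$ satisfies the key identity
\[
N_{\Omega}\bigl(j_{0}(z),-\overline{j_{0}(z)}\bigr)=\prod_{i=1}^{r}\bigl(1+|z_{i}|^{2}\bigr)=N_{\Delta^{r}}(z,-\bar z),
\]
which gives $j_{0}^{*}\omega^{*}_{\Omega}=\omega^{*}_{\Delta^{r}}$; total geodesicity of $j_{0}$ is standard and follows from the Hermitian symmetric space structure of $(\mathbb{C}^{n},g^{*}_{\Omega})$. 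Precomposing $j_{0}$ with a suitable element of the isotropy group $K$ of $\Omega$ at $0$ produces an embedding $j$ whose tangent space at $0$ contains the prescribed direction $X'$; here one invokes the spectral theorem for Hermitian positive Jordan triple systems, which says that $K$ acts transitively on Jordan frames and thereby allows one to rotate any prescribed direction into the diagonal.

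\emph{Extension to the Hartogs level.} Set $\widetilde{j}(z_{0},z)=(z_{0},j(z))$. From the identity above one has
\[
N_{\Omega}^{\mu}\bigl(j(z),-\overline{j(z)}\bigr)+|z_{0}|^{2}=\prod_{i=1}^{r}\bigl(1+|z_{i}|^{2}\bigr)^{\mu}+|z_{0}|^{2},
\]
so that $\widetilde{j}^{*}\omega^{*}_{\Omega,\mu}=\omega^{*}_{\Delta^{r},\mu}$, which already shows that $\widetilde{j}$ is a holomorphic K\"ahler embedding. Choosing $V'\in T_{0}\mathbb{C}^{r}$ with $(dj)_{0}(V')=X'$ and setting $V_{0}=X_{0}\,\partial_{z_{0}}+V'$, we immediately get $(d\widetilde{j})_{(z_{0},0)}(V_{0})=X$.

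\emph{The main obstacle: total geodesicity of $\widetilde{j}$.} The delicate step is to upgrade $\widetilde{j}$ from a K\"ahler embedding to a totally geodesic one. My primary approach is to realize the image $\widetilde{j}(\mathbb{C}^{r+1})=\mathbb{C}\times j(\mathbb{C}^{r})$ as the fixed-point set of a holomorphic isometric involution of $(\mathbb{C}^{n+1},g^{*}_{\Omega,\mu})$: one takes an involution $\sigma\in K$ whose fixed-point set in $\mathbb{C}^{n}$ is exactly $j(\mathbb{C}^{r})$ (its existence is supplied by the Jordan-theoretic description of the polydisk, handled factor by factor in the reducible case and verified directly in the exceptional cases), and extends it trivially as $(z_{0},z)\mapsto(z_{0},\sigma z)$; the extension is an isometry of $g^{*}_{\Omega,\mu}$ because $N_{\Omega}$ is $K$-invariant, and its fixed-point set is precisely $\widetilde{j}(\mathbb{C}^{r+1})$. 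Total geodesicity then follows from the standard principle that the fixed-point set of an isometry is a totally geodesic submanifold. As a fallback, one may compute the second fundamental form of $\widetilde{j}$ at points $(z_{0},0)$ by direct differentiation of the K\"ahler potential $\log(N_{\Omega}^{\mu}(z,-\bar z)+|z_{0}|^{2})$: at $z=0$ the potential decouples additively, so the second fundamental form splits into a base part (vanishing because $j$ is itself totally geodesic) and a cross part in the $z_{0}$-direction (vanishing by the diagonal structure of the potential at $z=0$); real-analyticity then propagates the vanishing to the whole image.
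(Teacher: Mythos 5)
Your reduction to the base (Jordan frame, the identity $N_{\Omega}(j_0(z),-\overline{j_0(z)})=\prod_i(1+|z_i|^2)$, transitivity of the isotropy group on frames/polydisks, and the trivial lift in the $z_0$-variable) matches the paper's outline, and the verification that $\widetilde{j}$ is a K\"ahler embedding with $(d\widetilde j)(V_0)=X$ is fine. The gap is exactly where you flag the difficulty: total geodesicity of $\widetilde{j}$. Your primary mechanism asks for an involution $\sigma$ in the isotropy group $K$ whose fixed-point set in $\C^n$ is \emph{exactly} the maximal polydisk $j(\C^r)$. Such an involution does not exist in general: already for $\Omega_{I}[n,n]$ with $n\ge 3$ the candidate involutions $Z\mapsto DZD^{-1}$ must have $D^2$ central, hence $D$ has at most two distinct eigenvalues, and their fixed-point sets are block-diagonal matrix spaces strictly containing the diagonal polydisk; the classification of fixed loci of involutions of $\mathrm{Gr}(3,6)$ likewise contains no $(\mathbb{P}^1)^3$. (One could try the common fixed set of a torus in $K$, which is still totally geodesic, but you would then have to check it equals the polydisk and not a larger Peirce subspace; you have not done this, and it is not what you claim.)

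The fallback is also not sound as stated. Computing the second fundamental form of $\widetilde{j}$ only at the points $(z_0,0)$ and then invoking ``real-analyticity'' does not propagate the vanishing: $\{z=0\}$ is a positive-codimension real-analytic subset of the $(r+1)$-dimensional image, and a real-analytic tensor can vanish there without vanishing identically (the ambient $(\C^{n+1},g^*_{\Omega,\mu})$ is not homogeneous, and the dual Hartogs polydisk is not homogeneous either, so there is no isometry group moving $(z_0,0)$ to a general point of $\C\times\Pi^*$). This is precisely why the paper proves more: Lemma~\ref{lemtotgroos} establishes the first- and second-derivative identities \eqref{EQdeNWzk} for $N_\Omega$ at \emph{every} point of the polydisk $\Pi$, yielding the block-diagonal form \eqref{eqgjk=0roos} of the metric along the whole slice $\C\times\Pi$; by the sign relation $\partial\bar\partial D^{g^*}_0(z,\bar z)=-\partial\bar\partial D^{g}_0(z,-\bar z)$ this transfers to \eqref{eqgjk=0dual}, and only then does the Christoffel-symbol computation of Proposition~\ref{PROPPolyDisk1} give total geodesicity of $\C\times\Pi^*$ everywhere (Proposition~\ref{PROPPolyDual}). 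To repair your argument you would need either that pointwise computation along all of $\Pi$, or a genuine fixed-point (or sub-HPJTS) realization of $\C\times\Pi^*$ inside $\C^{n+1}$ valid for every type, including the exceptional and reducible cases.
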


Theorem \ref{THMhpoly} extends the main result of \cite{MOSSAZEDDA2022polch}, originally proved for classical Cartan-Hartogs domains, to Hartogs domains over arbitrary bounded symmetric domains. Our result should  be viewed as a counterpart of the standard polydisk theorem for Hermitian symmetric spaces of noncompact types (see \cite[Part~1]{WolfBook1969FineStructureHSSpolydisc}). Dually, Theorem \ref{THMhpolydual} provides the affine-chart version of the polydisk theorem for Hermitian symmetric spaces of compact type (see \cite{HELGASONbookDiffGeom1978}). 
The polydisk theorem has numerous applications  including rigidity phenomena \cite{MOK1989MetricRigidityThmLocHSS, Mok2011ALM} and it is fundamental  for the study of  the symplectic geometry of Hermitian symmetric spaces of noncompact type
and their duals \cite{DISCALALOI2008sympdual,LoiMossaZuddas2015}.
Moreover, it has proved useful in the study of the diastatic exponential \cite{LOIMOSSA2011DistExp} and in the analysis of the volume and diastatic entropy of symmetric bounded domains \cite{MOSSA2013VolumeEntr}. We therefore expect our results to serve as an effective tool for addressing geometric problems on Hartogs domains and on their duals.

\smallskip

For example, as a main consequence of our analysis, we obtain the following obstruction.

\begin{theorem}\label{THMCORhpolydual}
Let $(\mathbb{C}^{n+1}, g_{{\Omega,\mu}}^{*})$ be a dual Hartogs domain. If $\mu \neq 1$ or $\operatorname{rank}(\Omega) \ge 2$, then there does not exist a totally geodesic isometric immersion
\[
f: \big(\mathbb{C}^{n+1}, g_{{\Omega,\mu}}^{*}\big) \hookrightarrow (M,g)
\]
into any compact Riemannian manifold $(M,g)$ with $\dim M \ge n+1$.
\end{theorem}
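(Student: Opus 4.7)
The plan is to reduce the problem to the dual Hartogs--Polydisk via Theorem~\ref{THMhpolydual}, and then derive a contradiction from a curvature obstruction.

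Assuming such an immersion $f$ exists, the first step is to apply Theorem~\ref{THMhpolydual} at the origin to produce a totally geodesic holomorphic embedding
\[
\widetilde{j}\colon \mathbb{C}^{r+1}\hookrightarrow \mathbb{C}^{n+1},\qquad \widetilde{j}(z_0,z) = \bigl(z_0,\,j(z)\bigr),
\]
with $r=\rank(\Omega)$ and $j\colon (\mathbb{C}^r,g_{\Delta^r}^{*})\hookrightarrow (\mathbb{C}^n, g_{\Omega}^{*})$ the dual polydisk embedding. Using that the polydisk embedding preserves the generic norm, $N_\Omega(j(z),-\overline{j(z)})= N_{\Delta^r}(z,-\bar z)$, formula~\eqref{roosmetricdual} gives $\widetilde{j}^{*}\omega_{\Omega,\mu}^{*} = \omega_{\Delta^r,\mu}^{*}$, so $\widetilde{j}$ is a totally geodesic isometric K\"ahler embedding $(\mathbb{C}^{r+1},g_{\Delta^r,\mu}^{*})\hookrightarrow (\mathbb{C}^{n+1},g_{\Omega,\mu}^{*})$. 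The composition $f\circ\widetilde{j}$ is then a totally geodesic isometric immersion of $(\mathbb{C}^{r+1}, g_{\Delta^r,\mu}^{*})$ into the compact manifold $(M,g)$. The Gauss equation (with vanishing second fundamental form) gives that every sectional curvature of $(\mathbb{C}^{r+1},g_{\Delta^r,\mu}^{*})$ equals a corresponding sectional curvature of $(M,g)$; compactness of $M$ then forces these curvatures to be uniformly bounded.

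The second step is to show that, under either hypothesis $\mu\ne 1$ or $r\ge 2$, the dual Hartogs--Polydisk has unbounded sectional curvatures, contradicting the above. I would perform a direct computation from the K\"ahler potential $\Phi=\log\bigl(\prod_{j=1}^{r}(1+|z_j|^2)^\mu + |z_0|^2\bigr)$ at the points $p_s = (s,0,\dots,0)$, $s>0$. At such points the mixed metric entries $g_{0\bar i}$ and $g_{i\bar j}$ ($i\ne j$, $i,j\ge 1$) vanish, and each first partial $\partial_{z_k} g_{1\bar q}$ carries a factor of $\bar z_k$, so the nonlinear correction in the K\"ahler curvature formula drops out at $p_s$. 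A calculation of $\partial_{v_k} g_{1\bar 1}|_{v=0}$ (with $v_j=|z_j|^2$) should yield, for $r\ge 2$, the holomorphic bisectional curvature
\[
H(\partial_{z_1},\partial_{z_2})\big|_{p_s} = -\,s^{2},
\]
and, for $r=1$ with $\mu\ne 1$, the holomorphic sectional curvature
\[
H(\partial_{z_1})\big|_{p_s} = \frac{2\bigl(1+(1-\mu)s^{2}\bigr)}{\mu}.
\]
Both are unbounded as $s\to\infty$, and via the standard identity $H(X,Y) = K(X,Y)+K(X,JY)$ (respectively $H(X) = K(X,JX)$) this forces the real sectional curvatures to be unbounded as well.

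The main obstacle I expect is the case $r\ge 2$ with $\mu=1$: on every complex coordinate line through $p_s$ the induced metric reduces to a scaling of a Fubini--Study metric, and the holomorphic sectional curvatures along such lines stay bounded; the unboundedness is detected only in a mixed holomorphic two-plane, which is why the argument must use the bisectional --- rather than the holomorphic sectional --- curvature. A second subtle point is to verify the norm identity $N_\Omega\circ j = N_{\Delta^r}$ uniformly across the reducible and exceptional cases, but this is exactly what underlies the construction in Theorem~\ref{THMhpolydual}.
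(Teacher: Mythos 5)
Your proposal follows essentially the same route as the paper: reduce via Theorem~\ref{THMhpolydual} to the dual Hartogs--polydisk, then exhibit sectional curvatures at points $(w,0,\dots,0)$ growing like $|w|^2$ --- the holomorphic sectional curvature in the $z_1$-direction when $\mu\neq 1$, and the mixed $(z_1,z_2)$-plane curvature when $r\ge 2$ --- which contradicts the boundedness of sectional curvature on a compact manifold via total geodesy. The only discrepancy is the sign of the $(\mu-1)|w|^2$ term in the $r=1$ case (the paper records $\tfrac{2+2(\mu-1)|w|^2}{\mu}$ versus your $\tfrac{2(1+(1-\mu)s^2)}{\mu}$), which is immaterial since unboundedness for $\mu\neq 1$ holds either way.
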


This theorem significantly sharpens the obstruction proved in \cite{LOIMOSSAZUDDAS2024dualemb}: 
apart from the homogeneous ball case $\Omega=\B^{n}$ and $\mu=1$, 
 there is no compact K\"ahler manifold $(N,h)$ of complex dimension $n+1$ containing 
$(\C^{n+1}, g_{{\Omega,\mu}}^{*})$ as an open dense subset.

\smallskip

\begin{remark}\rm
The same techniques developed in this paper show that Theorems \ref{THMhpoly} and \ref{THMhpolydual} remain valid for the Bergman metric $g_{M_{\Omega, \mu}}$ and for the \K\ metric $\hat g_{\Omega,\mu}$ on $M_{\Omega,\mu}$ (introduced in \cite{LOIMOSSAZUDDAS2024dualemb}), together with their duals, namely $g_{M_{\Omega, \mu}}^{*}$ and $\hat g^{*}_{\Omega,\mu}$ (see \cite{LOIMOSSAZUDDAS2025bergman}). By contrast, whether Theorem \ref{THMCORhpolydual} holds for the Bergman metric is still open; see \cite[Sect.~4]{LOIMOSSAZUDDAS2025bergman} for a discussion.
\end{remark}

The paper is organized as follows.
Section~\ref{SECcartan} reviews Hermitian symmetric spaces of noncompact type in their realization as bounded symmetric domains and constructs explicit totally geodesic polydisk embeddings, including the exceptional cases (Propositions~\ref{PROPpolymaxVI} and \ref{PROPpolymaxV}). In the following section we prove our main results, Theorems~\ref{THMhpoly} and \ref{THMhpolydual}, and derive the compactification obstruction in Theorem~\ref{THMCORhpolydual}. Along the way we record the Hartogs-polydisk reduction and the lifting of base automorphisms (Propositions~\ref{PROPPolyDisk1}, \ref{PROPPolyDisk2} and Lemmas~\ref{LEMliftgen}, \ref{LEMliftgend}), which will be used repeatedly in the proofs.

\section{Hermitian Symmetric Spaces  and Polydisk Embeddings}\label{SECcartan}
In this section, we begin by recalling the classification of Hermitian symmetric spaces of noncompact type (HSSNCT) in their realization as bounded symmetric domains.  We then undertake an explicit investigation of the Hermitian positive Jordan triple–system (HPJTS) structures associated with the exceptional Cartan domains, and we present concrete, totally geodesic embeddings of maximal–rank polydisks, thereby extending the results previously obtained for the classical Cartan domains in \cite{MOSSAZEDDA2022polch}.  These constructions will play a central role in our subsequent analysis of the geometric and curvature properties of the Hartogs domains over bounded symmetric domains.

\vskip 0.2cm

Let $\Omega$ be a Hermitian symmetric space of noncompact type (HSSNCT) in its realization as bounded symmetric domain \((\Omega, c g_B)\), where \(\Omega \subset \mathbb{C}^n\) is a convex, circular domain, that is, \(z \in \Omega, \theta \in \mathbb{R} \Rightarrow e^{i\theta} z \in \Omega\) (see \cite{KOBAYASHI1959GeomBounded} for details) and $g_B$ denotes its Bergman metric. Every bounded symmetric domain can be decomposed as a product of irreducible factors, called \emph{Cartan domains}. According to É. Cartan’s classification, Cartan domains fall into two categories: classical and exceptional (see \cite{KOBAYASHIbookHyperbMfd1970} for details).

\subsection{Cartan domains}
Classical domains can be described in terms of complex matrices as follows:
$$
\begin{aligned}
& \Omega_I[n, m] = \left\{ Z \in M_{n, m}(\mathbb{C}) \mid I_n - Z Z^* > 0, \, n \leq m \right\}, \\
& \Omega_{II}[n] = \left\{ Z \in M_n(\mathbb{C}) \mid Z = -Z^T,\, I_n - Z Z^* > 0, \, n \geq 5 \right\}, \\
& \Omega_{III}[n] = \left\{ Z \in M_n(\mathbb{C}) \mid Z = Z^T,\, I_n - Z Z^* > 0, \, n \geq 2 \right\}, \\
& \Omega_{IV}[n] = \left\{ (z_1, \ldots, z_n) \in \mathbb{C}^n \,\middle|\, \sum_{j=1}^n |z_j|^2 < 1,\ 1 + \left| \sum_{j=1}^n z_j^2 \right|^2 - 2 \sum_{j=1}^n |z_j|^2 > 0,\ n \geq 5 \right\},
\end{aligned}
$$
where \( A > 0 \) means that \( A \) is positive definite.
The associated Bergman kernel functions are given by:
$$
\begin{aligned}
\mathrm{K}_{\Omega_I}(z, z) &= \frac{1}{V(\Omega_I)} \left[ \det(I_n - Z Z^*) \right]^{-(n + m)}, \\
\mathrm{K}_{\Omega_{II}}(z, z) &= \frac{1}{V(\Omega_{II})} \left[ \det(I_n - Z Z^*) \right]^{-(n - 1)}, \\
\mathrm{K}_{\Omega_{III}}(z, z) &= \frac{1}{V(\Omega_{III})} \left[ \det(I_n - Z Z^*) \right]^{-(n + 1)}, \\
\mathrm{K}_{\Omega_{IV}}(z, z) &= \frac{1}{V(\Omega_{IV})} \left( 1 + \left| \sum_{j=1}^n z_j^2 \right|^2 - 2 \sum_{j=1}^n |z_j|^2 \right)^{-n},
\end{aligned}
$$
where \( V(\Omega_j) \), \( j = I, \ldots, IV \), denotes the total volume of \( \Omega_j\) with respect to the Euclidean measure of the ambient complex Euclidean space (see \cite{DISCALALOI2007KmapHSSintoCP} for details). We refer the reader to \cite{WANGYINZHANGROSS2006KEonHartogs} for a more complete description of these domains.
sWe equip each domain \( \Omega \) with the hyperbolic metric \( g_{\Omega} \), a rescaling of the Bergman metric, characterized by having holomorphic sectional curvature with infimum equal to \(-4\). More precisely, the associated Kähler form is given by
$$
\omega_{\Omega} = -\frac{i}{2} \partial \bar{\partial} \log\left( N_{\Omega}(z, \bar{z}) \right),
$$
where
$$
N_{\Omega} = \left( V(\Omega)\, \mathrm{K}_{\Omega} \right)^{-\gamma}
$$
is the generic norm associated to \( \Omega \), $K_\Omega$ the Bergman kernel, and \( \gamma \) is the \emph{genus}, a numerical invariant of the domain.
The table below summarizes the numerical invariants associated to each Cartan domain.\begin{table}[h!]
    \centering
    \renewcommand{\arraystretch}{1.5}
    \begin{tabular}{|c|c|c|c|c|c|c|}
        \hline
         & {$\begin{array}{l}
        \Omega_I[n, m] \\[-0.2cm]
        \left\{n \leq m\right\}
        \end{array}$} & {$\begin{array}{l}
       \Omega_{II}[n] \\[-0.2cm]
        \left\{5 \leq n\right\}
        \end{array}$} & {$\begin{array}{l}
       \Omega_{III}[n] \\[-0.2cm]
        \left\{2 \leq n\right\}
        \end{array}$} &  {$\begin{array}{l}
       \Omega_{IV}[n] \\[-0.2cm]
        \left\{5 \leq n\right\}
        \end{array}$} & $\Omega_V$ & $\Omega_{VI}$ \\
        \hline
        $d$ & $nm$ & $\frac{(n-1)n}{2}$ & $\frac{(n+1)n}{2}$ & $n$ & $16$ & $27$ \\
        \hline
        $r$ & $n$ & $\left\lfloor \frac{n}{2} \right\rfloor$ & $n$ & $2$ & $2$ & $3$ \\
        \hline
        $a$ & {$\begin{array}{l}
        2, \text{ if } 2 \leq n \\[-0.2cm]
        0, \text{ if } n = 1
        \end{array}$} & $4$ & $1$ & $n-2$ & $6$ & $8$ \\
         \hline
        $b$ & $m-n$ &{$\begin{array}{l}
        0, \text{ if } n \text{ is even} \\[-0.2cm]
        2, \text{ if } n \text{ is odd }
        \end{array}$} & $0$ & $0$ & $4$ & $0$ \\
        \hline
        $\gamma$ & $m+n$ & $2n-2$ & $n+1$ & $n$ & $12$ & $18$ \\
        \hline
    \end{tabular}
\end{table}
\FloatBarrier
A Cartan domain \( \Omega \subset \mathbb{C}^n \) is uniquely determined by a triple of integers \( (r, a, b) \), where \( r \) denotes the rank of \( \Omega \), and \( a \), \( b \) are positive integers. The complex dimension \( d \) of \( \Omega \) satisfies \( 2d = r(2b + 2 + a(r - 1)) \), and the genus \( \gamma \) of \( \Omega \) is given by \( \gamma = (r - 1)a + b + 2 \). Observe that \( (\Omega, \omega_{\Omega}) = (\mathbb{C}H^n, g_{\text{hyp}}) \) if and only if the rank \( r = 1 \).
For a more detailed description of these invariants, which is not necessary for our purposes, see for example \cite{ARAZYbookAsurveyOfInvariant}, \cite{ZHANG1997ConstantOfBSD}.


\subsection{Hermitian positive Jordan triple system of type~VI}

Let $\mathbb{O}$ be the real algebra of octonions, and let $\mathbb{O}_{\mathbb{C}}:=\mathbb{O}\otimes_{\mathbb{R}}\mathbb{C}$ denote its complexification (see \cite[§3]{BAEZoctonions2002}). Every element $Z\in\mathbb{O}_{\mathbb{C}}$ can be written as
\begin{equation}\label{Zgrande}
Z=z_0+\mathbf z
=
z_0+\sum_{j=1}^{7}z_j e_j,
\qquad z_0,z_j\in\mathbb{C},
\end{equation}
with respect to the standard Cayley basis $\{1,e_1,\dots,e_7\}$. For $W=w_0+\mathbf w\in\mathbb{O}_{\mathbb{C}}$ we set
\[
\widetilde Z:=z_0-\mathbf z
\quad\text{(octonionic/Cayley conjugation)},\qquad
\overline Z:=\overline{z_0}+\overline{\mathbf z}
\quad\text{(complex conjugation)},
\]
and define the (complex-bilinear) octonionic product and bilinear form by
\[
ZW=\bigl(z_0w_0-\langle\mathbf z,\mathbf w\rangle,\;\;
z_0\mathbf w+w_0\mathbf z+\mathbf z\times\mathbf w\bigr),\qquad
\langle Z,W\rangle=z_0w_0+\langle\mathbf z,\mathbf w\rangle,
\]
where $\langle\mathbf z,\mathbf w\rangle:=\sum_{j=1}^7 z_j w_j$ and $\times$ denotes the complex-bilinear extension of the Cayley cross product on $\mathbb{C}^7$.

\medskip

Consider the complex vector space
\begin{equation}\label{HZ123}
\mathcal H=\bigl\{(z_1,z_2,z_3,Z_1,Z_2,Z_3)\mid z_j\in\mathbb C,\; Z_j\in\mathbb{O}_{\mathbb C},\;j=1,2,3\bigr\}\cong\mathbb C^{27}.
\end{equation}
We endow $\mathcal H$ with the Hermitian form
\[
(x\mid y)=\sum_{j=1}^{3} z_j\,\overline{w}_j+\sum_{j=1}^{3}\langle Z_j,\overline{W}_j\rangle,
\]
and the \emph{Freudenthal product}
\[
x\times y
=
\begin{pmatrix}
z_2w_3+z_3w_2-\langle Z_1,W_1\rangle\\[3pt]
z_3w_1+z_1w_3-\langle Z_2,W_2\rangle\\[3pt]
z_1w_2+z_2w_1-\langle Z_3,W_3\rangle\\[3pt]
Z_2W_3+W_2Z_3 - z_1\widetilde W_1 - w_1\widetilde Z_1\\[3pt]
Z_3W_1+W_3Z_1 - z_2\widetilde W_2 - w_2\widetilde Z_2\\[3pt]
Z_1W_2+W_1Z_2 - z_3\widetilde W_3 - w_3\widetilde Z_3
\end{pmatrix}^{\!\top}.
\]
The \emph{Jordan triple product} is
\begin{equation}\label{EQtriplPrVI}
\{x,y,z\}=(x\mid y)\,z+(z\mid y)\,x-(x\times z)\times\overline y.
\end{equation}
With these structures, $(\mathcal H,\{\cdot,\cdot,\cdot\})$ is the Hermitian positive Jordan triple system (HPJTS) of type VI. The {\em Freudenthal adjoint}
\[
z^{\sharp}:=\tfrac12\,z\times z,
\]
has the explicit expression

\begin{equation}\label{EQadjVI}\begin{split} 
z^{\sharp}=\Bigl(&
z_2 z_3 - \tfrac{1}{2}\,\langle Z_1,Z_1\rangle,\;
z_3 z_1 - \tfrac{1}{2}\,\langle Z_2,Z_2\rangle,\;
z_1 z_2 - \tfrac{1}{2}\,\langle Z_3,Z_3\rangle,\;\\
&Z_2 Z_3 - z_1 \widetilde{Z}_1,\;
Z_3 Z_1 - z_2 \widetilde{Z}_2,\;
Z_1 Z_2 - z_3 \widetilde{Z}_3
\Bigr). 
\end{split}\end{equation}

\subsection{The exceptional Cartan domain of type VI}
We refer the reader to \cite{RoosBook2000Exceptional, VivianiBook2014HSS} for details.
The exceptional bounded symmetric domain of type VI is the subset $\Omega_{\mathrm{VI}}\subset\mathcal H$ of complex dimension $27$ given in its circular realization by
\begin{equation}\label{EQeqVI}
\begin{gathered}
1-(z \mid  z)+(z^{\sharp}  \mid  {z^{\sharp}})-\left|\dfrac{1}{3}\,(z^{\sharp}\mid\ov z)\right|^2\;>\;0,\\[4pt]
3-2\,(z \mid z)+(z^{\sharp}\mid{z^{\sharp}})\;>\;0,\\[2pt]
3-(z \mid z)\;>\;0,
\end{gathered}
\end{equation}

The associated generic norm is
\begin{equation}\label{EQgennormVI}
N_{\Omega_{\mathrm{VI}}}(z,\ov w)
=
1-(z\mid w)+(z^{\sharp}\mid w^{\sharp})-
\frac{1}{9}\,(z^{\sharp}\mid\ov z)\,{(\ov w^{\sharp}\mid w)},
\qquad z,w\in\mathcal H.
\end{equation}


\vskip0.2cm

We now state and prove the following proposition, in which we exhibit explicitly a totally geodesic embedding of a maximal–rank polydisk $\Delta^3$ into the exceptional domain $\Omega_{\mathrm{VI}}$ with respect to the hyperbolic metric.

\begin{prop}\label{PROPpolymaxVI}
The holomorphic map
\[
f \colon \Delta^3 \longrightarrow \Omega_{\mathrm{VI}},
\qquad
f(z_{1},z_{2},z_{3})
=\bigl(z_{1},\,z_{2},\,z_{3},\,0,\,0,\,0\bigr),
\]
is an isometric embedding, i.e.
\[
g_{\Delta^3}=f^{*}g_{\Omega_{\mathrm{VI}}},
\]
and is totally geodesic. Consequently,
\[
\Pi \;=\;\operatorname{Im}(f)
=\bigl\{(z_{1},z_{2},z_{3},Z_{1},Z_{2},Z_{3})\in\Omega_{\mathrm{VI}}
\ \big|\ Z_{1}=Z_{2}=Z_{3}=0\bigr\}
\]
is a totally geodesic polydisk of maximal rank in $\Omega_{\mathrm{VI}}$.
\end{prop}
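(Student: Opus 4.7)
My plan is to prove the proposition in two stages: first that $f$ is a K\"ahler isometric embedding, by computing the restriction of the generic norm; then that the image $\Pi$ is totally geodesic, by verifying that the corresponding subspace of $\mathcal{H}$ is a Jordan sub-triple system.

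For the isometric part, I would start by restricting the generic norm \eqref{EQgennormVI} to the image of $f$. At a point $x=(z_{1},z_{2},z_{3},0,0,0)$ the octonionic components vanish, so the Freudenthal adjoint \eqref{EQadjVI} collapses to $x^{\sharp}=(z_{2}z_{3},\,z_{3}z_{1},\,z_{1}z_{2},\,0,\,0,\,0)$. A direct calculation using the Hermitian form gives $(x\mid x)=\sum_{j}|z_{j}|^{2}$, $(x^{\sharp}\mid x^{\sharp})=\sum_{i<j}|z_{i}z_{j}|^{2}$, and $(x^{\sharp}\mid\overline{x})=3\,z_{1}z_{2}z_{3}$. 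Substituting these into \eqref{EQgennormVI}, the restriction factors exactly as
\[
f^{*}N_{\Omega_{\mathrm{VI}}}(z,\overline{z})
= \prod_{j=1}^{3}\bigl(1-|z_{j}|^{2}\bigr)
= N_{\Delta^{3}}(z,\overline{z}).
\]
Applying $-\tfrac{i}{2}\,\partial\bar\partial\log$ to both sides yields $f^{*}\omega_{\Omega_{\mathrm{VI}}}=\omega_{\Delta^{3}}$, and hence $f$ is a holomorphic isometric embedding.

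For the total geodesy, I would invoke the Jordan-theoretic criterion: a complex subspace $V\subset\mathcal{H}$ is the tangent space at the origin of a totally geodesic complex submanifold of $\Omega_{\mathrm{VI}}$ if and only if it is a sub-JTS, i.e.\ $\{V,V,V\}\subset V$. I would check this directly for $V=\{(z_{1},z_{2},z_{3},0,0,0)\}$ using \eqref{EQtriplPrVI}: the first two summands $(x\mid y)z$ and $(z\mid y)x$ trivially lie in $V$; for the third, inspecting the Freudenthal product shows that when both arguments have vanishing $Z_{j}$-blocks, every summand in the last three components of $x\times z$ contains a factor $Z_{i}W_{j}$ or $z_{i}\widetilde{Z}_{j}$ and therefore vanishes, so $x\times z\in V$. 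Iterating yields $(x\times z)\times\overline{y}\in V$, and hence $\{x,y,z\}\in V$. Thus $V$ is a sub-JTS, and $\Pi=V\cap\Omega_{\mathrm{VI}}$ is totally geodesic.

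The main obstacle I expect is the second step: the computation itself is short, but invoking the sub-JTS criterion rigorously requires background from the Peirce/spectral theory of Jordan triple systems. A cleaner alternative would be to exhibit an isometric involution of $\Omega_{\mathrm{VI}}$ whose fixed point set is exactly $\Pi$, but producing such an involution in the exceptional setting appears more intricate than the direct Jordan-product check. Finally, since $\dim_{\mathbb{C}}\Pi=3=\rank(\Omega_{\mathrm{VI}})$, the embedded polydisk has maximal rank.
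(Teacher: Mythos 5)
Your proposal is correct and follows essentially the same route as the paper: compute $f(z)^{\sharp}$ via \eqref{EQadjVI}, show the generic norm restricts to $\prod_{j=1}^{3}(1-|z_j|^2)$, and deduce total geodesy from the fact that $\{Z_1=Z_2=Z_3=0\}$ is a sub--HPJTS together with the correspondence between sub--HPJTS and totally geodesic submanifolds (the paper cites \cite[Prop.~2.1]{DISCALALOI2008sympdual} for this, which resolves the "obstacle" you worried about). Your explicit verification of the sub--JTS closure via the Freudenthal product is a minor elaboration of a step the paper simply asserts.
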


\begin{proof}
From \eqref{EQadjVI} one computes
\begin{equation}\label{eq:zsharpVI}
\bigl(f(z_{1},z_{2},z_{3})\bigr)^{\sharp}
=\bigl(z_{2}z_{3},\,z_{1}z_{3},\,z_{1}z_{2},\,0,\,0,\,0\bigr).
\end{equation}
Substituting \eqref{eq:zsharpVI} into the generic norm \eqref{EQgennormVI}  gives
\begin{equation}\label{eq:gennormVI}
\begin{aligned}
N_{\Omega_{\mathrm{VI}}}\bigl(f(z),\overline{f(z)}\bigr)
&=1-\bigl(|z_{1}|^{2}+|z_{2}|^{2}+|z_{3}|^{2}\bigr)
+\bigl(|z_{2}z_{3}|^{2}+|z_{1}z_{3}|^{2}+|z_{1}z_{2}|^{2}\bigr)
-|z_{1}z_{2}z_{3}|^{2}\\
&=(1-|z_{1}|^{2})(1-|z_{2}|^{2})(1-|z_{3}|^{2}).
\end{aligned}
\end{equation}
Therefore
\[
f^{*}\!\left(-\frac{i}{2}\,\partial\bar\partial\log N_{\Omega_{\mathrm{VI}}}\right)
=-\frac{i}{2}\,\partial\bar\partial\log\!\Bigl(\prod_{j=1}^3(1-|z_j|^2)\Bigr)
=\omega_{\Delta^{3}},
\]
so $g_{\Delta^{3}}=f^{*}g_{\Omega_{\mathrm{VI}}}$. Finally, the subspace
\[
W=\bigl\{(z_{1},z_{2},z_{3},Z_{1},Z_{2},Z_{3})\in\mathcal{H}\ \big|\ Z_{1}=Z_{2}=Z_{3}=0\bigr\}
\]
is a sub–HPJTS of $\mathcal H$, hence $\Pi=\operatorname{Im}(f)=W\cap\Omega_{\mathrm{VI}}$ is totally geodesic by the standard correspondence between sub–HPJTS and totally geodesic submanifolds (see \cite[Prop.~2.1]{DISCALALOI2008sympdual}).  Since the domain of type \(VI\) has rank \(3\), \(\Pi\) is of maximal rank.
This completes the proof.
\end{proof}


\subsection{The exceptional domain of type V}
Let \(\bigl(\mathcal{H}',\{\cdot,\cdot,\cdot\}'\bigr)\) be the sub–HPJTS of \(\bigl(\mathcal{H},\{\cdot,\cdot,\cdot\}\bigr)\) defined by
\[
\mathcal{H}'
\;:=\;\bigl\{(0,0,0,0,Z_{2},Z_{3})\in\mathcal{H}\ \big|\ Z_{2},Z_{3}\in\mathbb{O}_{\mathbb{C}}\bigr\},
\]
where \(\{\cdot,\cdot,\cdot\}'\) denotes the triple product induced from \(\{\cdot,\cdot,\cdot\}\) via \eqref{EQtriplPrVI}.

The \emph{exceptional domain of type V} is the \(16\)-dimensional bounded symmetric domain
\[
\Omega_{\mathrm{V}}
\;=\;\Omega_{\mathrm{VI}}\cap\mathcal{H}'
\;=\;
\Bigl\{\,z\in\mathcal{H}'\ \Big|\ 
1-(z\mid  z)+(z^{\sharp}\mid {z^{\sharp}})>0,\quad
2-(z\mid z)>0
\Bigr\},
\]
where the second equality follows from the defining inequalities of \(\Omega_{\mathrm{VI}}\) in \eqref{EQeqVI}.

Its associated generic norm (two–point form) is
\begin{equation}\label{EQgennormV}
N_{\Omega_{\mathrm{V}}}(z,\ov w)
\;=\;
N_{\Omega_{\mathrm{VI}}}(z,\ov w)
\;=\;
1-(z\mid w)+(z^{\sharp}\mid w^{\sharp}),
\qquad z,w\in \mathcal{H}',
\end{equation}
since \((z^{\sharp}\mid\ov z)=0\) for all \(z\in\mathcal{H}'\).

In order to exhibit a totally geodesic maximal–rank polydisk inside the exceptional domain of type \(V\), we next construct an explicit holomorphic embedding of \(\Delta^2\) into \(\Omega_{\mathrm{V}}\) and verify that it is both isometric and totally geodesic.


\begin{prop}\label{PROPpolymaxV}
The holomorphic map
\[
f \colon \Delta^2 \longrightarrow \Omega_{\mathrm{V}},\qquad
f(z_{1},z_{2})
=\bigl(0,\,0,\,0,\,0,\;\tfrac{z_{1}-z_{2}}{\sqrt{2}}+ i\,\tfrac{z_{1}+z_{2}}{\sqrt{2}}\,e_{1},\;0\bigr),
\]
is an embedding and satisfies
\[
g_{\Delta^2}=f^{*}g_{\Omega_{\mathrm{V}}}.
\]
In particular, its image
\[
\Pi'=\operatorname{Im}(f)
=\bigl\{(0,0,0,0,Z_{2},0)\in\Omega_{\mathrm{V}}\ \big|\ Z_{2}\in\operatorname {Span}_{\mathbb C}\{1,e_{1}\}\bigr\}
\]
is a totally geodesic polydisk of maximal rank in \(\Omega_{\mathrm{V}}\).
\end{prop}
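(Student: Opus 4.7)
\smallskip

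The plan is to follow the same template as Proposition \ref{PROPpolymaxVI}: compute the Freudenthal adjoint $f(z)^{\sharp}$, substitute it into the simplified generic norm \eqref{EQgennormV} valid on $\mathcal{H}'$, and verify that the result factors as $\prod_{j=1}^{2}(1-|z_j|^{2})$. Totally geodesicity will then follow from realizing $\Pi'$ as the intersection of $\Omega_{V}$ with a sub-HPJTS of $\mathcal{H}'$ via \cite[Prop.~2.1]{DISCALALOI2008sympdual}.

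\smallskip

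First I would set $a=\tfrac{z_{1}-z_{2}}{\sqrt{2}}$ and $b=i\tfrac{z_{1}+z_{2}}{\sqrt{2}}$, so that $f(z)$ has only its fifth slot $Z_{2}=a+b\,e_{1}$ nonzero. Applying \eqref{EQadjVI} with all other components vanishing, the only possibly nonzero entry of $f(z)^{\sharp}$ is the second scalar slot, which equals $-\tfrac12\langle Z_{2},Z_{2}\rangle$. Using complex-bilinearity of $\langle\cdot,\cdot\rangle$ together with $\langle 1,1\rangle=\langle e_{1},e_{1}\rangle=1$, one gets
\[
\langle Z_{2},Z_{2}\rangle
=a^{2}+b^{2}
=\tfrac12(z_{1}-z_{2})^{2}-\tfrac12(z_{1}+z_{2})^{2}
=-2z_{1}z_{2},
\]
so $f(z)^{\sharp}=(0,z_{1}z_{2},0,0,0,0)$. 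Next, using $(x\mid x)=\sum|x_{j}^{s}|^{2}+\sum\langle X_{j},\overline{X_{j}}\rangle$, one computes $(f(z)\mid f(z))=|a|^{2}+|b|^{2}=|z_{1}|^{2}+|z_{2}|^{2}$ and $(f(z)^{\sharp}\mid f(z)^{\sharp})=|z_{1}z_{2}|^{2}$; substitution into \eqref{EQgennormV} yields
\[
N_{\Omega_{V}}\bigl(f(z),\overline{f(z)}\bigr)
=1-(|z_{1}|^{2}+|z_{2}|^{2})+|z_{1}z_{2}|^{2}
=(1-|z_{1}|^{2})(1-|z_{2}|^{2}),
\]
which shows both $f(\Delta^{2})\subset\Omega_{V}$ (the second defining inequality $2-(z|z)>0$ is automatic) and $f^{*}\omega_{\Omega_{V}}=\omega_{\Delta^{2}}$. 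Injectivity of $f$ is clear because $(z_{1},z_{2})\mapsto(a,b)$ is a $\mathbb{C}$-linear isomorphism onto $\operatorname{span}_{\mathbb{C}}\{1,e_{1}\}$.

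\smallskip

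For totally geodesicity I would verify that
\[
W':=\bigl\{(0,0,0,0,Z_{2},0)\in\mathcal{H}'\ \big|\ Z_{2}\in\operatorname{span}_{\mathbb{C}}\{1,e_{1}\}\bigr\}
\]
is a sub-HPJTS of $(\mathcal{H}',\{\cdot,\cdot,\cdot\}')$. By \eqref{EQtriplPrVI} it suffices to check that $(x\times z)\times\overline{y}$ lies in $W'$ when $x,y,z\in W'$. A direct application of the Freudenthal product formula shows that $x\times z$ has only its second scalar component $-\langle X_{2},Z_{2}\rangle\in\mathbb{C}$ nonzero, and a second application shows that $(x\times z)\times\overline{y}$ has only its fifth slot nonzero, equal to $\langle X_{2},Z_{2}\rangle\,\widetilde{\overline{Y_{2}}}$. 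Since the subalgebra $\operatorname{span}_{\mathbb{C}}\{1,e_{1}\}\subset\mathbb{O}_{\mathbb{C}}$ is stable under both $\widetilde{\cdot}$ and $\overline{\cdot}$, this element still belongs to $W'$. Therefore $\Pi'=f(\Delta^{2})=W'\cap\Omega_{V}$ is totally geodesic, and since $\operatorname{rank}(\Omega_{V})=2$ it is of maximal rank.

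\smallskip

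The main obstacle is the bookkeeping of the two distinct conjugations on $\mathbb{O}_{\mathbb{C}}$ (Cayley vs.~complex) together with the fact that $\langle\cdot,\cdot\rangle$ is complex-bilinear rather than Hermitian; the factor $i$ in the coefficient of $e_{1}$ is chosen precisely so that $a^{2}+b^{2}=-2z_{1}z_{2}$, which is what produces the desired factorization of $N_{\Omega_{V}}$. Everything else reduces to a routine checking of vanishing entries in the Freudenthal product.
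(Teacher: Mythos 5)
Your proposal is correct and follows essentially the same route as the paper: compute $\langle Z_2,Z_2\rangle=-2z_1z_2$ to get $f(z)^{\sharp}=(0,z_1z_2,0,0,0,0)$, substitute into \eqref{EQgennormV} to obtain the factorization $(1-|z_1|^2)(1-|z_2|^2)$, and conclude total geodesicity from the sub--HPJTS correspondence of \cite[Prop.~2.1]{DISCALALOI2008sympdual}. The only difference is that you explicitly verify via the Freudenthal product that $W'$ is closed under the triple product, whereas the paper simply asserts this; your check is correct and slightly more self-contained.
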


\begin{proof}
Write
\[
Z_{2}=\alpha+\beta\,e_{1},
\qquad
\alpha=\tfrac{z_{1}-z_{2}}{\sqrt{2}},\quad
\beta=i\,\tfrac{z_{1}+z_{2}}{\sqrt{2}},
\]
so that \(f(z_{1},z_{2})=(0,0,0,0,Z_{2},0)\in\mathcal H'\).
Using the adjoint formula \eqref{EQadjVI} (with \(z_{1}=z_{2}=z_{3}=0\) and \(Z_{1}=Z_{3}=0\)) one obtains
\[
\bigl(f(z_{1},z_{2})\bigr)^{\sharp}
=\bigl(0,\;-\tfrac12\langle Z_{2},Z_{2}\rangle,\;0,\;0,0,0\bigr).
\]
Since \(\langle Z_{2},Z_{2}\rangle=-2\,z_{1}z_{2}\),
it follows that
\[
\bigl(f(z_{1},z_{2})\bigr)^{\sharp}=(0,\;z_{1}z_{2},\;0,\;0,0,0).
\]
By \eqref{EQgennormV}  the generic norm of \(\Omega_{\mathrm V}\) along the image of \(f\) reduces to
\[
N_{\Omega_{\mathrm V}}\bigl(f(z),\overline{f(z)}\bigr)
=1-(f(z)\mid f(z))+\bigl(f(z)^{\sharp}\mid f(z)^{\sharp}\bigr).
\]
Here,
\[
(f(z)\mid f(z))=|\alpha|^{2}+|\beta|^{2}
=\tfrac12\bigl(|z_{1}-z_{2}|^{2}+|z_{1}+z_{2}|^{2}\bigr)=|z_{1}|^{2}+|z_{2}|^{2},
\]
and \(\bigl(f(z)^{\sharp}\mid f(z)^{\sharp}\bigr)=|z_{1}z_{2}|^{2}\).
Therefore
\begin{equation}\label{eq:gennormV}
N_{\Omega_{\mathrm V}}\bigl(f(z),\overline{f(z)}\bigr)
=1-|z_{1}|^{2}-|z_{2}|^{2}+|z_{1}z_{2}|^{2}
=(1-|z_{1}|^{2})(1-|z_{2}|^{2}).
\end{equation}
We obtain
\[
f^{*}\omega_{\Omega_{\mathrm V}}
=-\frac{i}{2}\,\partial\bar\partial
\log\bigl((1-|z_{1}|^{2})(1-|z_{2}|^{2})\bigr)
=\omega_{\Delta^{2}},
\]
so \(g_{\Delta^{2}}=f^{*}g_{\Omega_{\mathrm V}}\) and \(f\) is an isometric holomorphic immersion.

The map \(f\) is linear and injective; moreover, the inverse on its image is holomorphic:
if \(Z_{2}=\alpha+\beta e_{1}\) with \(\alpha,\beta\in\mathbb C\), then
\[
z_{1}=\frac{\alpha-i\beta}{\sqrt{2}},
\qquad
z_{2}=-\frac{\alpha+i\beta}{\sqrt{2}}.
\]
Hence \(f\) is an embedding.
Finally, consider the sub–HPJTS of \(\mathcal{H}'\)
\[
W'=\bigl\{(0,0,0,0,Z_{2},0)\in\mathcal H'\ \big|\ Z_{2}\in\Span_{\mathbb C}\{1,e_{1}\}\bigr\}.
\]
By the bijection between sub–HPJTS and totally geodesic Hermitian symmetric subdomains (cf.\ \cite[Proposition 2.1]{DISCALALOI2008sympdual}), it follows that
\[
\Pi' \;=\;\operatorname{Im}(f)
\;=\;W'\cap\Omega_{\mathrm{V}}
\]
is a totally geodesic, holomorphically isometric embedding of \((\Delta^{2},g_{\Delta^{2}})\) inside \(\Omega_{\mathrm{V}}\).  Since the domain of type \(V\) has rank \(2\), \(\Pi'\) is of maximal rank.
\end{proof}


\section{Proofs of the main results}

\subsection{Proof of Theorem \ref{THMhpoly}}
\noindent
\noindent
In Lemma~\ref{lemtotgroos} below, together with the immediately following Proposition~\ref{PROPPolyDisk1}, we prove the theorem at the base point \(p=(0,0)\) for a suitably chosen tangent vector \(X\in T_pM_{\Omega,\mu}\).  In the subsequent lemmas, we exploit the domain’s symmetries to extend this result to arbitrary points and tangent directions, thus completing the proof of Theorem \ref{THMhpoly}.
\begin{lemma}\label{lemtotgroos}
Let $\Omega\subset\mathbb{C}^n$ be a bounded symmetric domain of rank $r$, realized in its circular embedding $\W\subset\mathbb{C}^n$. Suppose $\Omega$ decomposes as a product of irreducible Cartan domains, $\Omega\simeq\Omega_1\times\cdots\times\Omega_t$, with ranks $r_\ell$ so that $r=r_1+\cdots+r_t$.
For each $\ell=1,\dots,t$, let
$$
f_\ell:\; (\Delta^{r_\ell},\,g_{\Delta^{r_\ell}})\longrightarrow (\W_\ell,\,g_{\Omega_\ell}),\  f(0)=0,
$$
be the  totally geodesic holomorphic isometry constructed in Propositions~\ref{PROPpolymaxVI} and \ref{PROPpolymaxV} for exceptional domains, and in \cite[Sec.~2]{MOSSAZEDDA2022polch} for classical  Cartan domains. 
Define
\begin{equation}\label{eq:fnsdjfgns2}
f\;=\;f_1\times\cdots\times f_t:\;(\Delta^{r},\,g_{\Delta^{r}})\longrightarrow (\W,\,g_{\Omega})
\end{equation}
and set $\Pi:= f(\Delta^r)\subset\W$.
Then, there exists a permutation of the standard coordinates $(z_1,\dots,z_n)$ on $\mathbb{C}^n$ such that
\[
\Pi \;=\; \bigl\{\,z\in\Omega \;\big|\; z_{r+1}=\cdots=z_{n}=0\,\bigr\}.
\]

Moreover, let $M_{\Omega,\mu}$ be the Hartogs-type domain over $\Omega$ endowed with its Kähler metric $g:= g_{{\Omega,\mu}}$, and write local holomorphic coordinates on $M_{\Omega,\mu}$ as $(z_0,z_1,\dots,z_n)$, where $z_0\in\mathbb{C}$ is the fiber coordinate and $z=(z_1,\dots,z_n)\in\W$ is the base coordinate. Denote
\[
g_{j\bar k}\;=\; g\!\bigl(\partial_{z_j},\,\partial_{\bar z_k}\bigr),\qquad 0\le j,k\le n.
\]
Then, along the complex submanifold $\{(z_0,z)\in\mathbb{C}\times\Pi\}\subset M_{\Omega,\mu}$, the Hermitian matrix $\bigl(g_{j\bar k}(z_0,z)\bigr)_{0\le j,k\le n}$ is block–diagonal:
\begin{equation}\label{eqgjk=0roos}
\bigl(g_{j\bar k}(z_{0},z)\bigr)\;=\;
\begin{pmatrix}
\bigl(g_{j\bar k}\bigr)_{0\le j,k\le r} & \;0\\[6pt]
0 & \bigl(g_{j\bar k}\bigr)_{\,r<j,k\le n}
\end{pmatrix},
\qquad (z_0,z)\in\mathbb{C}\times\Pi,
\end{equation}
i.e., $g_{j\bar k}(z_0,z)=0$ whenever $0\le j\le r<k\le n$ or $0\le k\le r<j\le n$.
\end{lemma}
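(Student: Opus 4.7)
I would address the two parts of the lemma in sequence. For the coordinate realization of $\Pi$, I would inspect the explicit embeddings $f_\ell$ produced in Propositions~\ref{PROPpolymaxVI} and \ref{PROPpolymaxV}, together with the classical analogues in \cite{MOSSAZEDDA2022polch}. In every case $\Pi_\ell=f_\ell(\Delta^{r_\ell})$ is an $r_\ell$-dimensional complex linear subspace of $\Omega_\ell\subset\mathbb{C}^{d_\ell}$ cut out by the vanishing of $d_\ell-r_\ell$ linear coordinates, possibly after a unitary change of variables in $\mathbb{C}^{d_\ell}$ (which is an isometry of $g_{\Omega_\ell}$ and leaves $N_{\Omega_\ell}$ invariant). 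A global permutation bringing the $r=r_1+\cdots+r_t$ polydisk coordinates to the first positions then yields $\Pi=\{z_{r+1}=\cdots=z_n=0\}$.

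For the metric claim I would expand the K\"ahler potential $-\log F$ with $F=N_\Omega^\mu-|z_0|^2$ and, by repeated quotient-rule differentiation, show that for $0\le j\le r<k\le n$ the mixed entry $g_{j\bar k}(z_0,z)$ is, modulo a nowhere-vanishing factor, a polynomial expression in $\partial_{\bar z_k}N_\Omega$, $\partial_{z_j}N_\Omega$ and $\partial_{z_j}\partial_{\bar z_k}N_\Omega$. For $j=0$ only $\partial_{\bar z_k}N_\Omega$ appears, while for $1\le j\le r$ the cross terms involving $\partial_{\bar z_k}N_\Omega$ drop out once this quantity is shown to vanish on $\Pi$. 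The block-diagonality \eqref{eqgjk=0roos} is therefore equivalent to the two pointwise identities
\begin{equation*}
\partial_{\bar z_k}N_\Omega\big|_\Pi=0\ (k>r),\qquad
\partial_{z_j}\partial_{\bar z_k}N_\Omega\big|_\Pi=0\ (1\le j\le r<k),
\end{equation*}
the complementary block being handled by the Hermitian symmetry of $(g_{j\bar k})$.

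Since $N_\Omega=\prod_\ell N_{\Omega_\ell}$ and $\Pi=\Pi_1\times\cdots\times\Pi_t$ respect the product structure, both identities reduce to the case of a single irreducible Cartan factor. There I would invoke a torus $T_\ell\subset\Aut(\Omega_\ell)$ in the isotropy at the origin, arising from the Peirce decomposition of the associated HPJTS, which preserves $N_{\Omega_\ell}$ and distinguishes polydisk directions from transverse ones by weight: the polydisk directions, being the joint Peirce $2$-eigenspaces, have weights in the even sublattice $(2\mathbb{Z})^{r_\ell}$ of the character lattice, while the transverse directions, spanning the Peirce pieces $\mathcal{H}_{ij}$ ($i\neq j$) and $\mathcal{H}_{0i}$, have weights $e_i+e_j$ or $e_i$, lying in complementary cosets. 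By $T_\ell$-invariance of $N_{\Omega_\ell}$, every monomial contributing to $\partial_{\bar z_k}N_{\Omega_\ell}$ (respectively $\partial_{z_j}\partial_{\bar z_k}N_{\Omega_\ell}$) for $k>r_\ell$ carries a weight outside $(2\mathbb{Z})^{r_\ell}$ and must therefore contain at least one extra transverse factor; restriction to $\Pi_\ell$ then kills it.

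The main obstacle I anticipate is treating the exceptional factors $\Omega_V$ and $\Omega_{VI}$ uniformly. The torus-invariance argument is formally type independent, but verifying the claimed weight structure for the octonionic generic norms \eqref{EQgennormVI} and \eqref{EQgennormV} requires explicit bookkeeping with the Freudenthal adjoint \eqref{EQadjVI}. Alternatively one can establish the two identities by a direct polynomial expansion factor by factor: the classical cases are essentially done in \cite{MOSSAZEDDA2022polch}, and for the exceptional factors the computations begun in Propositions~\ref{PROPpolymaxVI} and \ref{PROPpolymaxV} (where $N_\Omega\big|_\Pi$ is evaluated) extend to the required derivatives without conceptual difficulty.
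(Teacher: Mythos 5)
Your proposal is correct and its skeleton coincides with the paper's: you reduce the block--diagonality of $(g_{j\bar k})$, via differentiation of the potential $-\log(N_\Omega^\mu-|z_0|^2)$, to exactly the two identities $\partial_{\bar z_k}N_\Omega\vert_\Pi=0$ and $\partial_{z_j}\partial_{\bar z_k}N_\Omega\vert_\Pi=0$ for $j\le r<k$, which is precisely the paper's reduction in \eqref{eqdedebD1}--\eqref{EQdeNWzk}; the treatment of the first (permutation) claim by inspection of the explicit linear embeddings also matches what the paper leaves implicit. Where you genuinely diverge is in how the two vanishing identities are established: the paper restricts to the exceptional factors (quoting \cite{MOSSAZEDDA2022polch} for the classical ones) and verifies them by brute-force octonionic computation with the Freudenthal adjoint \eqref{EQadjVI}, i.e.\ formulas \eqref{EQde2NWVI_revised}--\eqref{EQdejV_revised}, whereas you propose a type-independent torus-weight argument: the compact torus generated by $\exp\bigl(i\theta_j D(e_j,e_j)\bigr)$ for a frame $e_1,\dots,e_r$ fixes the origin, preserves the (polynomial) generic norm, and acts on the Peirce spaces $V_{jj}$, $V_{ij}$, $V_{0i}$ with weights $2\epsilon_j$, $\epsilon_i+\epsilon_j$, $\epsilon_i$, so any monomial surviving restriction to $\Pi$ after one transverse derivative (or one transverse and one polydisk derivative) would carry a weight outside $(2\mathbb{Z})^{r}$, contradicting invariance. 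This is a clean and correct argument that buys uniformity --- it covers classical and exceptional factors at once, removing the dependence on \cite{MOSSAZEDDA2022polch} --- at the cost of two small points you should make explicit: (i) the transverse standard coordinates must be taken adapted to the joint Peirce decomposition of the chosen frame (a unitary change within each Peirce block, harmless for the vanishing statement, may be needed, e.g.\ for the $Z_3$-slot in type~V); and (ii) the identification of the polydisks of Propositions~\ref{PROPpolymaxVI} and~\ref{PROPpolymaxV} with the span of a frame of minimal tripotents (for type~V the frame is $\tfrac{1\pm i e_1}{\sqrt2}$ in the $Z_2$-slot). With those details supplied, your route is a valid and arguably more conceptual substitute for the paper's computation.
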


\begin{proof}
Clearly, it suffices to prove the lemma in the case where $\Omega$ is irreducible (equivalently, $t=1$), so that its circular realization $\W$ is a single Cartan domain. The classical types are treated in \cite{MOSSAZEDDA2022polch}; therefore we restrict to the two exceptional cases (types V and VI).

In order to prove \eqref{eqgjk=0roos}, observe that on any Hartogs domain \(M_{\Omega,\mu}\) the Calabi diastasis at the origin for the metric \(g=g_{{\Omega,\mu}}\) is  given by
\begin{equation}\label{EQdistagmw}
D_0^{g}(z,w)
\;=\;
-\log\bigl(N_{\Omega}(z,\bar z)^\mu - |z_0|^2\bigr)=
F\bigl(N_{\Omega}(z,\bar z),\,|z_0|^2\bigr),
\end{equation}
where \(F(x,y)=-\log(x^\mu-y)\).
Differentiating twice gives, for $j,k\ge1$,
\begin{equation}\label{eqdedebD1}
\begin{split}
g\bigl(\partial_{z_j},\partial_{\bar z_k}\bigr)
&=\frac{\partial^2D^g_0}{\partial z_j\,\partial\bar z_k}
=\frac{\partial^2N}{\partial z_j\partial\bar z_k}\,F_x
+\frac{\partial N}{\partial z_j}\,\frac{\partial N}{\partial\bar z_k}\,F_{xx},
\end{split}
\end{equation}
and for the mixed component with $z_0$,
\begin{equation}\label{eqdedebD2}
\begin{split}
g\bigl(\partial_{z_0},\partial_{\bar z_k}\bigr)
&=\frac{\partial^2D^g_0}{\partial z_0\,\partial\bar z_k}
=\bar z_0\,\frac{\partial N}{\partial\bar z_k}\,F_{xy}.
\end{split}
\end{equation}
Here $N=N_\Omega(z,\bar z)$ and subscripts on $F$ denote partial derivatives.
Hence to prove the block‐diagonal form \eqref{eqgjk=0roos} it suffices to show
\begin{equation}\label{EQdeNWzk}
\frac{\partial N}{\partial\bar z_k}(z,\bar z)=0,
\quad
\text{and}
\quad
\frac{\partial^2N}{\partial z_j\,\partial\bar z_k}(z,\bar z)=0,
\end{equation}
for $z \in \Pi$ and $0\leq j \leq r < k \leq n$.
 
Let $\Omega=\Omega_{VI}$ and let
\[
\Pi=\{(z_1,z_2,z_3,0, 0,0)\}\subset\Omega_{VI}
\]
be the polydisk of rank $3$ from Proposition~\ref{PROPpolymaxVI}.  We must verify that  the following hold for all \(j=0,1,2,3\), \(s=1,2,3\), \(t=1,\dots,8\):  
\begin{equation}\label{EQde2NWVI_revised}
\begin{aligned}
0
&=\frac{\partial^2 N_{\Omega_{\mathrm{VI}}}}{\partial z_j\,\partial\bar z_{st}}
\Big\vert_{\Delta^3} \\[-0.3em]
&=\Bigl[
  -\bigl(\tfrac{\partial z}{\partial z_j}\mid\tfrac{\partial z}{\partial z_{st}}\bigr)
  +\bigl(\tfrac{\partial z^\sharp}{\partial z_j}\mid\tfrac{\partial{z}^\sharp}{\partial z_{st}}\bigr) \\[-0.3em]
&\quad\;\;
  -\tfrac19\bigl(\,( \tfrac{\partial z^\sharp}{\partial z_j}\mid\bar z)
        + (z^\sharp\mid\tfrac{\partial\bar z}{\partial\bar z_j})\bigr)
    \bigl(\,( \tfrac{\partial\bar{z}^\sharp}{\partial\bar z_{st}}\mid  z)
        +(\bar z^\sharp\mid\tfrac{\partial z}{\partial z_{st}})\bigr)
\Bigr]_{\Delta^3},
\end{aligned}
\end{equation}
and
\begin{equation}\label{EQdeNWVI_revised}
\begin{aligned}
0
&=\frac{\partial N_{\Omega_{\mathrm{VI}}}}{\partial\bar z_{st}}
\Big\vert_{\Delta^3} \\[-0.3em]
&=\Bigl[
  -\bigl(z\mid\tfrac{\partial z}{\partial z_{st}}\bigr)
  +\bigl(z^\sharp\mid\tfrac{\partial{z}^\sharp}{\partial z_{st}}\bigr) \\[-0.3em]
&\quad\;\;
  -\tfrac19\,(z^\sharp\mid\bar z)\,
    \bigl(\,( \tfrac{\partial\bar{z}^\sharp}{\partial\bar z_{st}}\mid z)
        +(\bar z^\sharp\mid\tfrac{\partial z}{\partial z_{st}})\bigr)
\Bigr]_{\Delta^3}, 
\end{aligned}
\end{equation}
where
$Z_s=z_{s0}+\sum_{j=1}^{7}z_{sj} e_j, s=1, 2, 3$ (cf. \eqref{HZ123}).
 Both identities follow directly from the adjoint formula \eqref{eq:zsharpVI},
together with the explicit derivatives
\begin{equation}\label{EQdebst_revised}
\frac{\partial\overline{z^\sharp}}{\partial\bar z_{st}}\Big\vert_{\Delta^3}
=\bigl(0,0,0,\; -\overline{z}_1\,\varepsilon_t\,\delta_{1s},\; -\overline{z}_2\,\varepsilon_t\,\delta_{2s},\; -\overline{z}_3\,\varepsilon_t\,\delta_{3s}\bigr),
\end{equation}
\begin{equation}\label{EQdej_revised}
\frac{\partial z^\sharp}{\partial z_j}\Big\vert_{\Delta^3}
=\bigl(\delta_{j2}z_3+\delta_{j3}z_2,\;\delta_{j3}z_1+\delta_{j1}z_3,\;\delta_{j1}z_2+\delta_{j2}z_1,\,0,0,0\bigr).
\end{equation}
proving the required vanishing of the mixed derivatives
\eqref{EQdeNWzk}.

Let \(\Omega = \Omega_{V}\).  Consider the coordinates \((z_0,z) = (z_0, Z_2, Z_3)\) on \(\mathbb{C} \times \mathbb{O}_{\mathbb{C}}^2 \cong \C \times \mathcal H '\), restricted to the Hartogs domain
\[
M_{\Omega_V,\mu}
=\bigl\{(z_0,z)\in\C\times\Omega_V \mid |z_0|^2 < N_{\Omega_V}(z,\bar z)\bigr\}.
\]
Let \(\Delta^2\subset\Omega_V\) be the totally geodesic polydisk furnished by Proposition~\ref{PROPpolymaxV}.  We must check that for all multi‐indices \(st = 2t\) with \(t=3,\dots,8\), \(st=3t\) with \(t=1,\dots,8\), and \(j=1,2\), the following vanish on \(\Delta^2\):

\begin{align}
0&=\frac{\partial N_{\Omega_V}}{\partial\bar z_{st}}\Big\vert_{\Delta^2}
=\Bigl[-\bigl(z\mid\tfrac{\partial z}{\partial z_{st}}\bigr)
       +\bigl(z^\sharp\mid\tfrac{\partial{z}^\sharp}{\partial z_{st}}\bigr)\Bigr]_{\Delta^2},
\label{EQdeNWV_revised}\\
0&=\frac{\partial^2N_{\Omega_V}}{\partial z_{2j}\,\partial\bar z_{st}}\Big\vert_{\Delta^2}
=\Bigl[-\bigl(\tfrac{\partial z}{\partial z_{2j}}\mid\tfrac{\partial z}{\partial z_{st}}\bigr)
       +\bigl(\tfrac{\partial z^\sharp}{\partial z_{2j}}\mid\tfrac{\partial{z}^\sharp}{\partial z_{st}}\bigr)\Bigr]_{\Delta^2}.
\label{EQde2NWV_revised}
\end{align}

To see this, we need the following computations:
from the adjoint formula \eqref{EQadjVI}, the restriction of \(z^\sharp\) is
\begin{equation}\label{EQdies}
z^\sharp\big\vert_{\Delta^2}
=\bigl(0,\,-\tfrac12\,(z_{21}^2+z_{22}^2),\,0,\dots,0\bigr).
\end{equation}
differentiating \(\overline{z^\sharp}\) with respect to \(\bar z_{st}\) gives
\begin{equation}\label{EQdebstV_revised}
\frac{\partial\overline{z^\sharp}}{\partial\bar z_{st}}\Big\vert_{\Delta^2}
=\bigl(0,0,0,\,\partial_{\bar z_{st}}(\overline{Z}_2\,\overline{Z}_3),\,0,0\bigr).
\end{equation}
 differentiating \(z^\sharp\) with respect to \(z_{2j}\) yields
\begin{equation}\label{EQdejV_revised}
\frac{\partial z^\sharp}{\partial z_{2j}}\Big\vert_{\Delta^2}
=\bigl(0,\,-z_{2j},\,0,\,0,0,0\bigr).
\end{equation}  

Substituting \eqref{EQdies}, \eqref{EQdebstV_revised} and \eqref{EQdejV_revised} into \eqref{EQdeNWV_revised} and \eqref{EQde2NWV_revised} shows the required vanishing of the mixed derivatives. The proof is complete.

\end{proof}

\begin{prop}\label{PROPPolyDisk1}
Let $M_{\Omega,\mu}$ be a Hartogs domain over a  bounded symmetric domain $\Omega$ of rank $r$, and let 
$\Pi\subset\Omega$
be the totally geodesic polydisk provided by Lemma~\ref{lemtotgroos}.  Then the subset
$(\C\times\Pi)\,\cap\,M_{\Omega,\mu}$ is biholomorphic to the Hartogs domain $M_{\Delta^r,\mu}$ and it is a totally geodesic Kähler submanifold of $(M_{\Omega,\mu},g_{{\Omega,\mu}})$.
\end{prop}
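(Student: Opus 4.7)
The plan is to build the biholomorphism explicitly from the base map $f\colon\Delta^r\to\Pi$ of Lemma~\ref{lemtotgroos}, verify the isometry at the level of K\"ahler potentials, and then deduce the totally geodesic property directly from the block--diagonal structure recorded in~\eqref{eqgjk=0roos}.

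First I would define
\[
\widetilde f\colon M_{\Delta^r,\mu}\longrightarrow M_{\Omega,\mu},
\qquad
\widetilde f(z_0,z)=\bigl(z_0,\,f(z)\bigr).
\]
The computations inside the proof of Lemma~\ref{lemtotgroos} (see formulas \eqref{eq:gennormVI} and \eqref{eq:gennormV} for the exceptional cases, and the analogous identities in \cite{MOSSAZEDDA2022polch} for the classical cases) yield
\[
N_{\Omega}\bigl(f(z),\overline{f(z)}\bigr)\;=\;\prod_{j=1}^{r}\bigl(1-|z_j|^2\bigr)\;=\;N_{\Delta^r}(z,\bar z).
\]
Hence the defining condition $|z_0|^2<N_{\Delta^r}^{\mu}(z,\bar z)$ of $M_{\Delta^r,\mu}$ matches the condition $|z_0|^2<N_{\Omega}^{\mu}(f(z),\overline{f(z)})$, so $\widetilde f$ maps $M_{\Delta^r,\mu}$ biholomorphically onto $(\mathbb{C}\times\Pi)\cap M_{\Omega,\mu}$, with inverse given by $(z_0,w)\mapsto(z_0,f^{-1}(w))$.

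Next I would check the K\"ahler identification. Pulling back \eqref{roosmetric} and using the identity above one obtains
\[
\widetilde f^{*}\omega_{\Omega,\mu}
=-\tfrac{i}{2}\,\partial\bar\partial\log\!\Bigl(N_{\Omega}^{\mu}\bigl(f(z),\overline{f(z)}\bigr)-|z_0|^2\Bigr)
=-\tfrac{i}{2}\,\partial\bar\partial\log\!\Bigl(\prod_{j=1}^{r}(1-|z_j|^2)^{\mu}-|z_0|^2\Bigr)
=\omega_{\Delta^r,\mu},
\]
so $\widetilde f$ is a holomorphic isometric embedding.

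Finally, for the totally geodesic claim I would work in the coordinates of Lemma~\ref{lemtotgroos}, in which the submanifold is cut out by $z_{r+1}=\cdots=z_n=0$. Since the mixed components $g_{j\bar k}$ with $0\le j\le r<k\le n$ vanish identically along $\mathbb{C}\times\Pi$ by \eqref{eqgjk=0roos}, differentiating this vanishing in any tangential direction $\partial_{z_l}$ with $0\le l\le r$ gives
\[
\partial_{z_l} g_{k\bar m}\big|_{\mathbb{C}\times\Pi}=0
\qquad\text{whenever } 0\le l,k\le r<m\le n.
\]
Combining this with the block--diagonal form of the inverse $g^{l\bar\beta}$, the holomorphic Christoffel symbols satisfy
\[
\Gamma^{l}_{jk}\big|_{\mathbb{C}\times\Pi}
=\sum_{\beta} g^{l\bar\beta}\,\partial_{z_j}g_{k\bar\beta}\big|_{\mathbb{C}\times\Pi}
=0
\qquad (0\le j,k\le r<l\le n),
\]
since for $\beta\le r$ the factor $g^{l\bar\beta}$ vanishes and for $\beta>r$ the factor $\partial_{z_j}g_{k\bar\beta}$ vanishes. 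The conjugate statement holds analogously, and these are the only potentially non--zero Christoffel symbols of a K\"ahler metric; hence the second fundamental form of $(\mathbb{C}\times\Pi)\cap M_{\Omega,\mu}$ in $(M_{\Omega,\mu},g_{\Omega,\mu})$ vanishes, i.e.\ it is totally geodesic. The step I expect to require the most care is the passage from the pointwise vanishing \eqref{eqgjk=0roos} to the vanishing of the tangential derivatives; the remark that \eqref{eqgjk=0roos} holds \emph{identically} on $\mathbb{C}\times\Pi$ (not merely at one point) is what makes this step work, and it is exactly what Lemma~\ref{lemtotgroos} provides.
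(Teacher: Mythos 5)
Your proposal is correct and follows essentially the same route as the paper: the biholomorphism comes from the identity $N_{\Omega}(f(z),\overline{f(z)})=N_{\Delta^r}(z,\bar z)$ established in Lemma~\ref{lemtotgroos} (via Propositions~\ref{PROPpolymaxVI} and \ref{PROPpolymaxV}), and the totally geodesic property is deduced exactly as in the paper by combining the block--diagonal form \eqref{eqgjk=0roos} (hence of the inverse metric) with the vanishing of the tangential derivatives $\partial_{z_\ell}g_{j\bar k}$ to kill the Christoffel symbols $\Gamma^{t}_{\ell j}$ with $\ell,j\le r<t$. Your explicit remark that one must use the vanishing of the mixed components \emph{identically along} $\mathbb{C}\times\Pi$, not just pointwise, is precisely the point the paper relies on implicitly.
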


\begin{proof}
Let $\W\subset\C^n$ be a bounded symmetric  domain, and let $(z_0,z_1,\dots,z_n)$ denote the coordinates on $\C^n$, with indices ordered as in Lemma \ref{lemtotgroos}. From Lemma \ref{lemtotgroos} we see that, along the totally geodesic polydisk
$$
\Pi \;=\;\{\,z_{r+1}=\cdots=z_n=0\,\}\;\subset\;\W,
$$
the metric components satisfy
$$
g^{j\bar k}(z)=0
\quad\text{and}\quad
\frac{\partial\,g_{j\bar k}}{\partial z_\ell}(z)=0
\quad
\text{whenever } j,\ell\le r<k.
$$
Accordingly, for any indices $\ell,j\le r<t$, the Christoffel symbols
$$
\Gamma_{\ell j}^t(z)
=\sum_{k=0}^n g^{t\bar k}(z)\,\frac{\partial\,g_{j\bar k}}{\partial z_\ell}(z)
$$
vanish identically on $\{z_{r+1}=\cdots=z_n=0\}$.  It follows that the second fundamental form of the inclusion
$$
(\C\times\Pi)\cap M_{\Omega,\mu}\;\hookrightarrow\;M_{\Omega,\mu}
$$
is identically zero, and hence this inclusion is totally geodesic.
By  Proposition \ref{PROPpolymaxVI}\eqref{eq:gennormVI} and Proposition \ref{PROPpolymaxV}\eqref{eq:gennormV}
the  the generic norm on \(\Pi\), satisfies
\[
N_{\Omega}\bigl(f(z),\overline{f(z)}\bigr)
\;=\;\prod_{i=1}^r \bigl(1 - |z_i|^2\bigr)
\;=\;N_{\Delta^r}(z,\bar z),
\]
where $f$ is given by \eqref{eq:fnsdjfgns2}.
Hence $\mathrm{id}_{\C}\times f: M_{\Delta^r,\mu}\rightarrow  (\C\times\Pi)\,\cap\,M_{\Omega,\mu}$
is a biholomorphism and the map  
\[
M_{\Delta^r,\mu}
\;\xrightarrow{\;\mathrm{id}_{\C}\times f\;}
(\C\times\Pi)\,\cap\,M_{\Omega,\mu}
\;\hookrightarrow\;M_{\Omega,\mu}
\]
is a totally geodesic \K\ embedding.
\end{proof}

\noindent
In the following lemma we show that every automorphism of the base domain naturally lifts to an automorphism of the associated Hartogs domain.
\begin{lem}\label{LEMliftgen}
Let $\Omega$ be a bounded symmetric domain and let $\phi\colon \Omega\to\Omega$ be an isometric automorphism.  Then $\phi$ lifts to a biholomorphism 
\[
\tilde\phi\colon M_{\Omega, \mu}\;\longrightarrow\;M_{\Omega, \mu},
\]
defined by
\begin{equation}\label{EQliftgen}
\tilde{\phi}(w, z)
=\bigl(e^{\mu\,h_{\phi}(z)}\,w, \phi(z)\bigr),
\end{equation}
where $h_{\phi}\colon\Omega\to\C$ is a holomorphic function chosen so that
\begin{equation}\label{EQliftgen2}
\tilde\phi^*g_{{\Omega,\mu}} = g_{{\Omega,\mu}}
\end{equation}
Moreover, if $\phi$ fixes the origin, one may take $h_{\phi}\equiv0$, so that
\[
\tilde{\phi}(w, z)=(w, \phi(z)).
\]
\end{lem}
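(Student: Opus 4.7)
The plan is to (i) identify the correct $h_\phi$ from the $\phi$-invariance of the hyperbolic K\"ahler form on $\Omega$, (ii) check that the candidate $\tilde\phi$ is a biholomorphism of $M_{\Omega,\mu}$, (iii) verify the metric invariance \eqref{EQliftgen2} by a direct $\partial\bar\partial$ computation, and (iv) treat the base–point case by invoking Cartan's linearity theorem.

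For the first step, since $\phi\in\Aut(\Omega)$ is a biholomorphism, it is automatically an isometry of $(\Omega,g_\Omega)$, so
\[
-\tfrac{i}{2}\,\partial\bar\partial\log N_{\Omega}(\phi(z),\overline{\phi(z)})
=-\tfrac{i}{2}\,\partial\bar\partial\log N_{\Omega}(z,\bar z).
\]
Hence the real-valued function
\[
u(z)\;:=\;\log N_{\Omega}\bigl(\phi(z),\overline{\phi(z)}\bigr)-\log N_{\Omega}(z,\bar z)
\]
is pluriharmonic on $\Omega$. Because $\Omega$ (in its circular realization) is contractible and in particular simply connected, every pluriharmonic function is the real part of a globally defined holomorphic function; I take $h_\phi$ to be (any) holomorphic function with $u=2\mu\,\mathrm{Re}\,h_\phi$. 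Equivalently,
\begin{equation}\label{EQplanTrf}
N_{\Omega}^{\mu}\bigl(\phi(z),\overline{\phi(z)}\bigr)
\;=\;
\bigl|e^{\mu h_\phi(z)}\bigr|^{2}\;N_{\Omega}^{\mu}(z,\bar z).
\end{equation}

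For the second step, \eqref{EQplanTrf} shows at once that $|e^{\mu h_\phi(z)}w|^{2}<N_{\Omega}^{\mu}(\phi(z),\overline{\phi(z)})$ if and only if $|w|^{2}<N_{\Omega}^{\mu}(z,\bar z)$, so the holomorphic map $\tilde\phi$ defined by \eqref{EQliftgen} sends $M_{\Omega,\mu}$ into itself. Its inverse is the analogous lift of $\phi^{-1}$, namely $(w',z')\mapsto\bigl(e^{-\mu h_\phi(\phi^{-1}(z'))}w',\,\phi^{-1}(z')\bigr)$, which is again holomorphic, so $\tilde\phi$ is a biholomorphism. For the metric invariance I compute
\begin{align*}
\tilde\phi^{*}\omega_{\Omega,\mu}
&=-\tfrac{i}{2}\,\partial\bar\partial\log\!\Bigl(N_{\Omega}^{\mu}(\phi(z),\overline{\phi(z)})-|e^{\mu h_\phi(z)}w|^{2}\Bigr)\\
&=-\tfrac{i}{2}\,\partial\bar\partial\log\!\Bigl(e^{2\mu\,\mathrm{Re}\,h_\phi(z)}\bigl(N_{\Omega}^{\mu}(z,\bar z)-|w|^{2}\bigr)\Bigr)\\
&=-\tfrac{i}{2}\,\partial\bar\partial\bigl[2\mu\,\mathrm{Re}\,h_\phi(z)\bigr]\;-\;\tfrac{i}{2}\,\partial\bar\partial\log\bigl(N_{\Omega}^{\mu}-|w|^{2}\bigr)
\;=\;\omega_{\Omega,\mu},
\end{align*}
since $\mathrm{Re}\,h_\phi$ is pluriharmonic.

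Finally, if $\phi(0)=0$, Cartan's linearity theorem for bounded circular domains (see, e.g., \cite{KOBAYASHIbookHyperbMfd1970}) implies that $\phi$ is the restriction of a $\mathbb{C}$-linear unitary transformation; in particular it belongs to the isotropy subgroup of $\Aut(\Omega)$ at $0$. The generic norm $N_\Omega$, built from the Bergman kernel as recalled in Section~\ref{SECcartan}, is invariant under this isotropy action, so $u\equiv 0$ and one may take $h_\phi\equiv 0$, yielding $\tilde\phi(w,z)=(w,\phi(z))$. The only delicate point in the whole argument is the passage from pluriharmonicity of $u$ to the existence of a single-valued holomorphic primitive $h_\phi$, which is why the contractibility of $\Omega$ is essential.
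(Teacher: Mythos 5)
Your proof is correct and takes essentially the same route as the paper's: you extract $h_\phi$ from the $\partial\bar\partial$-invariance of $\log N_\Omega$ (the paper writes this as $N_\Omega(\phi(z),\overline{\phi(z)})=N_\Omega(z,\bar z)\,e^{h_\phi+\overline{h_\phi}}$ without spelling out the pluriharmonic-primitive step), check well-definedness of the lift the same way, verify isometry by the same $\partial\bar\partial$ computation, and for the origin-fixing case replace the paper's appeal to invariance of the Jordan-triple structure (Upmeier) by Cartan's linearity theorem plus invariance of the Bergman kernel, which is equally valid. The only slip is the normalization ``$u=2\mu\,\mathrm{Re}\,h_\phi$'', which should read $u=2\,\mathrm{Re}\,h_\phi$ to be consistent with your own transformation rule for $N_\Omega^\mu$ and with the computation that follows.
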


\begin{proof}
Let $\phi\colon \Omega \to \Omega$ be an isometric automorphism.  Since $\phi$ preserves the Kähler form associated to the generic norm $N_\Omega$, we have
\[
\partial\bar\partial\log N_\Omega\bigl(\phi(z),\overline{\phi(z)}\bigr)
=\partial\bar\partial\log N_\Omega(z,\bar z).
\]
Hence, there exists a holomorphic function $h_\phi\colon \Omega\to\C$ such that
\[
N_\Omega\bigl(\phi(z),\overline{\phi(z)}\bigr)
= N_\Omega(z,\bar z)\,\exp\bigl(h_\phi(z)+\overline{h_\phi(z)}\bigr).
\]
It follows that the map
\[
\tilde\phi\colon M_{\Omega, \mu}\longrightarrow M_{\Omega, \mu},
\quad
\tilde\phi(w, z) 
= \bigl(e^{\mu\,h_\phi(z)}\,w, \phi(z)\bigr),
\]
is well defined, since
\[
\bigl|e^{\mu\,h_\phi(z)}w\bigr|^2
< \bigl|e^{\mu\,h_\phi(z)}\bigr|^2\,N^\mu_\Omega(z,\bar z)
= N^\mu_\Omega\bigl(\phi(z),\overline{\phi(z)}\bigr).
\]

The identity  in \eqref{EQliftgen2} follows from the fact that  Calabi's diastasis functions are invariant under the lift $\tilde\phi$.  Indeed, one checks directly that:
\begin{align}
\partial\bar\partial\,D^{g_{{\Omega,\mu}}}_0\bigl(\tilde\phi(w, z)\bigr)
&=\partial\bar\partial\log\bigl(N_\Omega(\phi(z),\overline{\phi(z)})^\mu -|e^{\mu h_\phi(z)}w|^2\bigr)\notag\\
&=\partial\bar\partial\log\bigl(N_\Omega(z,\bar z)^\mu -|w|^2\bigr)
=\partial\bar\partial\,D^{g_{{\Omega,\mu}}}_0(w, z).
\label{EQlemphipb1}
\end{align}
Hence  the diastasis is preserved  and \eqref{EQliftgen2} follows.
Finally, since any biholomorphism \(\phi\) of \(\Omega\) fixing the origin preserves the corresponding positive Hermitian Jordan triple structure (see, e.g., \cite[p.~551]{UPMEIER1984Toeplitzbsd}), it follows that the generic norm \(N_{\Omega}\) is invariant under \(\phi\).  In this case, one can take \(h_{\phi}\equiv0\) in \eqref{EQliftgen}, yielding the simpler lift
$\tilde\phi(w, z) \;=\; \bigl(w, \phi(z)\bigr)$.
\end{proof}

Here, by combining the preceding results, we show that a totally geodesic polydisk of maximal rank in the base domain gives rise to a totally geodesic Hartogs–polydisk in the corresponding Hartogs domain.
\begin{prop}\label{PROPPolyDisk2}
Let $\Omega$ be a bounded symmetric domain of  rank $r$, and let 
$\Pi\subset\Omega$
be a totally geodesic polydisk of complex dimension $r$.  Define
\[
C_{\Pi}
=\bigl(\C\times\Pi\bigr)\,\cap\,M_{\Omega,\mu}
=\{(w, z)\in M_{\Omega,\mu}\mid z\in\Pi\}.
\]
Then $C_{\Pi}\simeq M_{\Delta^r, \mu}$ and it  is a totally geodesic Kähler submanifold of $(M_{\Omega,\mu}, g_{{\Omega,\mu}}).$ 
\end{prop}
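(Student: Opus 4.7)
The strategy is to reduce the statement to the particular polydisk $\Pi_{0}$ built in Lemma~\ref{lemtotgroos}, for which the conclusion has already been established by Proposition~\ref{PROPPolyDisk1}. The bridge between $\Pi_{0}$ and an arbitrary maximal polydisk $\Pi$ will be an automorphism of the base $\Omega$ sending $\Pi_{0}$ to $\Pi$; I then transport the geometry by lifting this automorphism to the Hartogs domain via Lemma~\ref{LEMliftgen}.

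First I would invoke the classical polydisk theorem for Hermitian symmetric spaces of noncompact type \cite{WolfBook1969FineStructureHSSpolydisc}, which implies that $\Aut(\Omega)$ acts transitively on the family of maximal-rank totally geodesic polydisks in $\Omega$. Concretely, I would pick any point $p\in\Pi$, use the homogeneity of $\Omega$ to choose $g\in\Aut(\Omega)$ with $g(p)=0$, and then, since $\Pi_{0}$ and $g(\Pi)$ are both maximal polydisks through the origin, appeal to the Harish-Chandra/Wolf conjugacy statement to obtain $k$ in the isotropy group at $0$ with $k(\Pi_{0})=g(\Pi)$. Setting $\phi:=g^{-1}\circ k\in\Aut(\Omega)$ gives the desired automorphism with $\phi(\Pi_{0})=\Pi$.

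Next, Lemma~\ref{LEMliftgen} furnishes a biholomorphic lift
\[
\tilde\phi\colon M_{\Omega,\mu}\longrightarrow M_{\Omega,\mu},\qquad \tilde\phi(w,z)=\bigl(e^{\mu h_{\phi}(z)}w,\,\phi(z)\bigr),
\]
which is an isometry of $(M_{\Omega,\mu},g_{\Omega,\mu})$. Since the base coordinate of $\tilde\phi(w,z)$ is $\phi(z)$, the map $\tilde\phi$ restricts to a biholomorphism $C_{\Pi_{0}}\to C_{\Pi}$. Proposition~\ref{PROPPolyDisk1} already supplies a biholomorphism $\mathrm{id}_{\mathbb{C}}\times f\colon M_{\Delta^{r},\mu}\to C_{\Pi_{0}}$, so composing with $\tilde\phi$ yields a biholomorphism $M_{\Delta^{r},\mu}\to C_{\Pi}$. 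Moreover, since $C_{\Pi_{0}}$ is totally geodesic in $(M_{\Omega,\mu},g_{\Omega,\mu})$ and $\tilde\phi$ preserves the metric, its image $C_{\Pi}=\tilde\phi(C_{\Pi_{0}})$ is also totally geodesic, completing the argument.

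The only non-routine ingredient is the existence of $\phi$, and this rests entirely on the classical polydisk theorem for HSSNCT. Once $\phi$ is secured, the remainder is purely formal: no new computation involving Christoffel symbols or generic norms --- of the kind performed in Lemma~\ref{lemtotgroos} --- is required, since all such work is already encapsulated in the standard case treated by Proposition~\ref{PROPPolyDisk1}.
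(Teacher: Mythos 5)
Your proposal is correct and follows essentially the same route as the paper: reduce to the standard polydisk of Lemma~\ref{lemtotgroos} via the transitivity of $\Aut_0(\Omega)$ on maximal totally geodesic polydisks (the classical Polydisk Theorem), then transport both the biholomorphism type and the totally geodesic property using the lifted isometry $\tilde\phi$ from Lemma~\ref{LEMliftgen}. The paper's own proof is just a terser version of the same argument, so no further comparison is needed.
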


\begin{proof}
We may assume without loss of generality that the chosen polydisk $\Pi\subset\Omega$ passes through the origin.  The conclusion then follows from Proposition~\ref{PROPPolyDisk1} together with the Polydisk Theorem (see \cite{WolfBook1969FineStructureHSSpolydisc},  \cite[Thm.~VI.3.5]{ROOSinFarautKoranyiRoosBook2000}), which asserts that the identity component $\Aut_0(\Omega)$ acts transitively on the collection of all $r$-dimensional totally geodesic polydisks through the origin.  Applying to $M_{\Omega,\mu}$ the lifts of these automorphisms, as described in Lemma \ref{LEMliftgen}, completes the proof.
\end{proof}

By combining the preceding proposition with the classical Polydisk Theorem for Hermitian symmetric spaces, we complete the proof of Theorem~\ref{THMhpoly}.

\begin{proof}[Proof of Theorem \ref{THMhpoly}]
Let $p=(w, z)\in M_{\Omega,\mu}$ and fix a tangent vector
$$
X\in T_{p}M_{\Omega,\mu}.
$$
Write $X = X_1 + X_2$ with $X_1\in T_z\Omega$ and $X_2\in T_w\C$.  By the Polydisk Theorem (\cite{WolfBook1969FineStructureHSSpolydisc}), there exists a totally geodesic polydisk
$$
j\colon (\Delta^r, g_{\Delta^r}) \rightarrow (\Omega, g_{\Omega})
$$
of rank $r$ passing through $z$ such that $X_1\in T_z\Pi$, where $\Pi = \operatorname{Im}(j)$.  
Noticing that
$X\in T_{(w,z)}C_{\Pi}$
we conclude the proof by taking as 
$$\widetilde{j}:M_{\Delta^r, \mu}\simeq C_{\Pi}\hookrightarrow M_{\Omega,\mu}$$
the inclusion provided by Proposition \ref{PROPPolyDisk2}. 
\end{proof}


\subsection{Proof of Theorem \ref{THMhpolydual}}
The proof follows the same outline as in the noncompact dual case.  First, we establish the existence of a polydisk with the desired properties (Proposition~\ref{PROPPolyDual}).  Next, we invoke the domain’s symmetries (Lemma~\ref{LEMliftgend}) and apply the classical Polydisk Theorem.  Finally, Proposition~\ref{PROPPolydual2} together with the preceding arguments completes the proof.

\vskip0.1cm

\noindent

Consider a bounded symmetric domain $\W=\W_1\times\cdots\times\W_t\subset\C^n$  where 
$\Omega_\ell\subset\C^{n_\ell}$ is a Cartan domain  of rank $r_{\ell}$ endowed with its hyperbolic metric $g_{\Omega_\ell}$ (cf.  Section~\ref{SECcartan}). On $\C^{n_\ell}$, introduce the dual metric $g_{\Omega_\ell}^*$ whose Kähler form is  
\[
\omega_{{\Omega_\ell}}^* \;=\; \frac{i}{2}\,\partial\bar\partial \log N_{\Omega_\ell}(z,-\bar z),
\]
and on $\C^{r_\ell}$ introduce the dual metric $g_{\Delta^{r_\ell}}^*$ with Kähler form  
\[
\omega_{{\Delta^{r_\ell}}}^* \;=\; \frac{i}{2}\,\partial\bar\partial \log\!\Bigl(\prod_{j=1}^{r_\ell} (1+|z_j|^2)\Bigr).
\]
In particular (see, e.g., \cite{MOSSAZEDDAS2022symplch}), $(\C^{n_\ell},g_{\Omega_\ell}^*)$ is an open dense chart of the compact dual Hermitian symmetric space associated with $(\Omega_\ell,g_{\Omega_\ell})$, and 
\[
(\C^{r_\ell},g_{\Delta^{r_\ell}}^*) \;\cong\; (\C,\omega_{\mathrm{FS}})^{r_\ell},
\]
where $\omega_{\mathrm{FS}}$ denotes the Fubini–Study metric on $\mathbb{C}P^1$ restricted to the standard affine coordinate chart. 

Let  
\begin{equation}\label{EQfdeltar}
f_\ell\colon (\Delta^{r_\ell},g_{\Delta^{r_\ell}}) \longrightarrow (\Omega_\ell,g_{\Omega_\ell})
\end{equation}
be one of the totally geodesic, holomorphic isometric immersions constructed in Propositions~\ref{PROPpolymaxVI} and~\ref{PROPpolymaxV} for exceptional domains, or in \cite[Sec.~2]{MOSSAZEDDA2022polch} for classical  Cartan domains. Since $f_\ell$ preserves the Calabi diastasis, we have
\[
D^{g_{\Omega_\ell}}\bigl(f_\ell(z),\overline{f_\ell(z)}\bigr)
=-\log N_{\Omega_\ell}\bigl(f_\ell(z),\overline{f_\ell(z)}\bigr)
=-\log\!\Bigl(\prod_{k=1}^{r_\ell}(1-|z_k|^2)\Bigr)
=D^{g_{\Delta^{r_\ell}}}(z,\bar z).
\]
By a slight abuse of notation, we also denote by $f_\ell\colon \C^{r_\ell}\to\C^{n_\ell}$ the natural linear extension of $f_\ell$. Inspecting the explicit formula for $f_\ell$ shows immediately that $f_\ell(-z)=-f_\ell(z)$. It follows that
\[
D^{g_{\Omega_\ell}^*}\bigl(f_\ell(z),\overline{f_\ell(z)}\bigr)
=\log N_{\Omega_\ell}\bigl(f_\ell(z),-\overline{f_\ell(z)}\bigr)
=\log\!\Bigl(\prod_{k=1}^{r_\ell}(1+|z_k|^2)\Bigr)
=D^{g_{\Delta^{r_\ell}}^*}(z,\bar z).
\]
We conclude that
\[
f_\ell\colon (\C^{r_\ell},\,g_{\Delta^{r_\ell}}^*) \longrightarrow (\C^{n_\ell},\,g_{\Omega_\ell}^*)
\]
is a holomorphic isometric embedding.

Consider now the holomorphic isometric embedding
\[
f:=f_1\times\cdots\times f_t:\;(\C^{r},\,g_{\Delta^{r}}^*) \longrightarrow (\C^{n},\,g_{\Omega}^*),
\qquad
g_{\Omega}^*=g_{\Omega_1}^*\times\cdots\times g_{\Omega_t}^*.
\]
We will show that $f$ is also totally geodesic. Adopting the index ordering of Lemma~\ref{lemtotgroos}, define the linear subspace
\begin{equation}\label{EQstdrPidual}
\Pi^* \;=\; \{\,z\in\C^n \mid z_{r+1}=\cdots=z_n=0\}.
\end{equation}
Let $g^*:=g^*_{{\Omega,\mu}}$, and write
\[
g^*_{j\bar k}
= g^*\!\bigl(\tfrac{\partial}{\partial z_j},\,\tfrac{\partial}{\partial\bar z_k}\bigr),
\quad
j,k = 0,1,\dots,n.
\]
Since
\[
\partial\bar\partial D^{g^*}_0(z,\bar z)
= -\,\partial\bar\partial D^g_0(z,-\bar z),
\]
it follows from \eqref{eqgjk=0roos} that, along $(\C\times\Pi^*)\subset\C^{n+1}$, the Hermitian matrix $\bigl(g^*_{j\bar k}(z_0,z)\bigr)_{0\le j,k\le n}$ splits as
\begin{equation}\label{eqgjk=0dual}
\bigl(g^*_{j\bar k}(z_0,z)\bigr)_{0\le j,k\le n}
=
\begin{pmatrix}
\bigl(g^*_{j\bar k}\bigr)_{0\le j,k\le r} & 0\\[6pt]
0 & \bigl(g^*_{j\bar k}\bigr)_{r<j,k\le n}
\end{pmatrix}.
\end{equation}
By the same argument used in the proof of Proposition~\ref{PROPPolyDisk1}, one concludes that the submanifold
$\C \times \Pi^* \;\subset\; \C^{n+1}$
is totally geodesic. Moreover, the map
\[
\mathrm{id}\times f \colon \C^{r+1}\;\xrightarrow{\simeq}\;\C\times\Pi^*\;\hookrightarrow\;\C^{n+1}
\]
is a holomorphic isometry. We therefore obtain the following proposition.

\begin{prop}\label{PROPPolyDual}
Let $\Omega\subset\C^n$ be a bounded symmetric domain, realized in its circular form, of rank $r$, and let $\Pi^*\subset\C^n$ be the linear subspace defined in \eqref{EQstdrPidual}. Then 
$\C \times \Pi^* \;\subset\; \C^{n+1}$
is a totally geodesic Kähler submanifold of $\bigl(\C^{n+1},g^*_{{\Omega,\mu}}\bigr)$.
\end{prop}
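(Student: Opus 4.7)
The plan is to reduce the statement to the block-diagonal identity \eqref{eqgjk=0dual}, which is the dual analog of Lemma~\ref{lemtotgroos}, and then argue as in Proposition~\ref{PROPPolyDisk1} that such a vanishing of cross metric components forces the relevant Christoffel symbols to vanish along $\C\times\Pi^*$. Concretely, I would first recall that the Calabi diastasis at the origin of $(\C^{n+1},g^*_{\Omega,\mu})$ is
\[
D_0^{g^*}(z_0,z)=\log\bigl(N_\Omega(z,-\bar z)^\mu+|z_0|^2\bigr),
\]
so that $\partial\bar\partial D_0^{g^*}(z,\bar z)=-\,\partial\bar\partial D_0^{g}(z,-\bar z)$ via the substitution $\bar z\mapsto -\bar z$. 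Since $\Pi^*$ is the linear image of $\Pi$ under the same substitution, Lemma~\ref{lemtotgroos} transfers directly: the Hermitian matrix $(g^*_{j\bar k}(z_0,z))_{0\le j,k\le n}$ block-diagonalizes along $\C\times\Pi^*$, giving \eqref{eqgjk=0dual}.

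Next, I would repeat the Christoffel-symbol computation from the proof of Proposition~\ref{PROPPolyDisk1} verbatim in the dual setting. For indices $0\le \ell,j\le r<t\le n$, the block-diagonal structure \eqref{eqgjk=0dual} gives $g^{*\,t\bar k}(z)=0$ whenever $k\le r$, so the only surviving terms in
\[
\Gamma^t_{\ell j}(z)=\sum_{k=0}^n g^{*\,t\bar k}(z)\,\frac{\partial g^*_{j\bar k}}{\partial z_\ell}(z)
\]
would involve $\partial g^*_{j\bar k}/\partial z_\ell$ with $j,\ell\le r<k$, and these vanish on $\C\times\Pi^*$ by the same argument as in Lemma~\ref{lemtotgroos} (the derivatives of the mixed block entries in $z$-directions tangent to $\Pi^*$ vanish there). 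Consequently the second fundamental form of $\C\times\Pi^*\hookrightarrow\C^{n+1}$ is identically zero.

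Finally, the Kähler property of the restriction is automatic: $\C\times\Pi^*$ is a complex linear subspace of $\C^{n+1}$, hence a complex submanifold, and the restriction of $g^*_{\Omega,\mu}$ is Kähler because $\omega^*_{\Omega,\mu}$ is. Therefore $(\C\times\Pi^*,g^*_{\Omega,\mu}\big|_{\C\times\Pi^*})$ is a totally geodesic Kähler submanifold of $(\C^{n+1},g^*_{\Omega,\mu})$.

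The only genuinely delicate point is the transfer of Lemma~\ref{lemtotgroos} to the dual setting. This is essentially a formal identity, since the generic norm is a polynomial in $(z,\bar z)$ and the identity $N_\Omega(z,-\bar z)=N_\Omega(z,\bar z)\big|_{\bar z\mapsto-\bar z}$ preserves the vanishing pattern of the mixed derivatives on $\Pi^*$; nevertheless, I would state this substitution explicitly, since it is what bridges the noncompact and compact dual pictures and justifies reusing the computations of \eqref{EQde2NWVI_revised}--\eqref{EQdejV_revised} without change.
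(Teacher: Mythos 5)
Your proposal is correct and follows essentially the same route as the paper: the paper likewise derives the block-diagonal identity \eqref{eqgjk=0dual} from \eqref{eqgjk=0roos} via the relation $\partial\bar\partial D_0^{g^*}(z,\bar z)=-\,\partial\bar\partial D_0^{g}(z,-\bar z)$ and then invokes the Christoffel-symbol argument of Proposition~\ref{PROPPolyDisk1} verbatim. Your added remark that the substitution $\bar z\mapsto-\bar z$ preserves the vanishing pattern of the mixed derivatives (because $\Pi^*$ is a linear subspace stable under $z\mapsto-z$) is exactly the implicit justification the paper relies on, so there is no gap.
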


\begin{lemma}\label{LEMliftgend}
Let \(\Omega\) be a bounded symmetric domain, realized in its circular form, and let 
\(\phi\colon (\C^n,g_{\Omega}^*)\to(\C^n,g_{\Omega}^*)\)
be an isometric biholomorphism fixing the origin.  Then the  map 
\[
\tilde\phi\colon (\C^{n+1},g^*_{{\Omega,\mu}})\longrightarrow(\C^{n+1},g^*_{{\Omega,\mu}})
\]
given by $\tilde\phi(z_0, z) \;=\; \bigl(z_0, \phi(z)\bigr)$ is an isometric biholomorphism.
\end{lemma}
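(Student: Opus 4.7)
The plan is to mimic the second half of Lemma \ref{LEMliftgen}, where $\phi$ already fixes the origin and no twisting of the fiber coordinate is needed. First I observe that $\tilde\phi$ is trivially a biholomorphism of $\C^{n+1}$, since it is the identity in the $z_0$ slot and the biholomorphism $\phi$ on the base. Hence the only content is the identity $\tilde\phi^{*}\omega^{*}_{\Omega,\mu}=\omega^{*}_{\Omega,\mu}$. Using the potential in \eqref{roosmetricdual}, I compute
$$
\tilde\phi^{*}\omega^{*}_{\Omega,\mu}
=\frac{i}{2}\,\partial\bar\partial\log\!\Bigl(N_{\Omega}^{\mu}(\phi(z),-\overline{\phi(z)})+|z_{0}|^{2}\Bigr),
$$
so the problem reduces to the pointwise identity
$$
N_{\Omega}\bigl(\phi(z),-\overline{\phi(z)}\bigr)=N_{\Omega}(z,-\bar z). \qquad(\ast)
$$

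The key step is to establish $(\ast)$. I would argue exactly as in the final paragraph of the proof of Lemma \ref{LEMliftgen}: an isometric biholomorphism of $(\C^{n},g_{\Omega}^{*})$ fixing the origin preserves the associated Hermitian positive Jordan triple structure, cf.\ \cite[p.~551]{UPMEIER1984Toeplitzbsd}; in particular $\phi$ is linear and preserves the generic norm $N_{\Omega}$ as a polynomial in $(z,\bar w)$, which yields $(\ast)$. An intrinsic variant is to appeal to Calabi's rigidity of the diastasis: a direct polarization of the potential, using $N_{\Omega}(z,0)=N_{\Omega}(0,\bar w)=1$, gives $D_{0}^{g_{\Omega}^{*}}(z)=\log N_{\Omega}(z,-\bar z)$ as the Calabi diastasis at the origin, so that an isometry fixing the origin must preserve it and $(\ast)$ follows by exponentiation. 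Once $(\ast)$ holds, the pulled-back potential coincides with the original and $\tilde\phi$ is an isometric biholomorphism.

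The main, though minor, subtlety is the validity of the HPJTS rigidity in the dual, compact-type setting: one needs that an isometric biholomorphism of $(\C^{n},g_{\Omega}^{*})$ fixing the origin acts as an HPJTS automorphism of $\C^{n}$, just as in the noncompact case of Lemma \ref{LEMliftgen}. This is exactly what allows the simpler lift $\tilde\phi(z_{0},z)=(z_{0},\phi(z))$ with no twisting factor $h_{\phi}$ on the $z_{0}$ slot, in perfect parallel with the closing sentence of the proof of Lemma \ref{LEMliftgen}.
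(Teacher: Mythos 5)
Your proposal is correct and follows essentially the same route as the paper: the proof reduces to the identity $N_{\Omega}(\phi(z),-\overline{\phi(z)})=N_{\Omega}(z,-\bar z)$, which the paper obtains exactly via your ``intrinsic variant,'' namely the invariance of the Calabi diastasis at the origin under an isometry fixing the origin, followed by the observation that the lift $\tilde\phi$ then preserves the diastasis of $g^{*}_{\Omega,\mu}$. Since the paper relies on the diastasis argument rather than on HPJTS rigidity in the compact dual setting, the subtlety you flag does not arise in its proof.
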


\begin{proof}
By the hereditary property of the Calabi diastasis, for all \(z\in\C^n\) one has
\[
D_0^{g_{\Omega}^*}\bigl(\phi(z)\bigr)
= D_0^{g_{\Omega}^*}(z)
\;\Longrightarrow\;
N_{\Omega}\bigl(\phi(z),-\overline{\phi(z)}\bigr)
= N_{\Omega}(z,-\bar z).
\]
Arguing as in the proof of Lemma~\ref{LEMliftgen}, consider the map
\[
\tilde\phi\colon \C^{n+1}\longrightarrow\C^{n+1},
\qquad
\tilde\phi(z_0,z) = \bigl(z_0,\phi(z)\bigr).
\]
One checks that \(\tilde\phi\) preserves the diastasis, namely
\[
D_0^{g^*_{{\Omega,\mu}}}(z_0,z)
= D_0^{g^*_{{\Omega,\mu}}}\bigl(\tilde\phi(z_0,z)\bigr),
\]
It follows that \(\tilde\phi\) is an isometric biholomorphism of \((\C^{n+1},g^*_{{\Omega,\mu}})\), as required.
\end{proof}

\begin{prop}\label{PROPPolydual2}
Let \(\Omega\) be a rank $r$ bounded symmetric domain, realized in its circular form, and let 
\(\Pi^*\subset\C^{n}\) be an \(r\)-dimensional dual polydisk through the origin which is totally geodesic with respect to the dual hyperbolic metric \(g_\Omega^*\).  Then the linear subspace
\[
C_{\Pi^*}
= \C\times\Pi^*\simeq \C^{r+1}
\;\subset\;\C^{n+1}
\]
is a totally geodesic Kähler submanifold of \(\bigl(\C^{n+1},g^*_{{\Omega,\mu}}\bigr)\).  
\end{prop}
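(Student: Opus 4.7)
The plan is to mirror the proof of Proposition~\ref{PROPPolyDisk2} in the dual setting. Proposition~\ref{PROPPolyDual} already furnishes one explicit totally geodesic Kähler polydisk, namely $C_{\Pi^*_{\mathrm{std}}}=\C\times\Pi^*_{\mathrm{std}}\subset\C^{n+1}$, where $\Pi^*_{\mathrm{std}}=\{z_{r+1}=\cdots=z_n=0\}$ is the standard maximal-rank dual polydisk in $\C^n$ associated to the splitting $\Omega\simeq \Omega_1\times\cdots\times\Omega_t$ together with the embeddings $f_\ell$ of \eqref{EQfdeltar}. To upgrade this to an arbitrary maximal-rank dual polydisk $\Pi^*\subset\C^n$ through the origin, I intend to move $\Pi^*_{\mathrm{std}}$ onto $\Pi^*$ by an origin-fixing isometry of $(\C^n,g_\Omega^*)$ and then lift that isometry to $\C^{n+1}$ using Lemma~\ref{LEMliftgend}.

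For the first step, I invoke the Polydisk Theorem in its compact dual formulation (see \cite{HELGASONbookDiffGeom1978}, and compare with \cite{WolfBook1969FineStructureHSSpolydisc,ROOSinFarautKoranyiRoosBook2000} in the noncompact case): the isotropy subgroup at the origin of the compact dual Hermitian symmetric space acts transitively on the collection of totally geodesic maximal-rank polydisks through the origin. The relevant point, which I would make explicit, is that isotropy transformations at the origin are linear in the bounded (resp.\ affine) realization and thus act on the affine chart $\C^n$ as biholomorphic linear isometries of $(\C^n,g_\Omega^*)$ fixing $0$; this is exactly the class of maps for which Lemma~\ref{LEMliftgend} applies. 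Consequently, there exists an isometric biholomorphism $\phi\colon(\C^n,g_\Omega^*)\to(\C^n,g_\Omega^*)$ with $\phi(0)=0$ and $\phi(\Pi^*_{\mathrm{std}})=\Pi^*$.

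For the second step, Lemma~\ref{LEMliftgend} produces the lift $\tilde\phi(z_0,z)=(z_0,\phi(z))$, which is an isometric biholomorphism of $(\C^{n+1},g^*_{\Omega,\mu})$. Since $\tilde\phi(\C\times\Pi^*_{\mathrm{std}})=\C\times\Pi^*=C_{\Pi^*}$ and isometries carry totally geodesic Kähler submanifolds to totally geodesic Kähler submanifolds, the conclusion follows from Proposition~\ref{PROPPolyDual}.

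The only substantive point is the invocation of the Polydisk Theorem in the compact case and the verification that the transitive action can be realized by maps preserving the affine chart $\C^n$ and fixing the origin; this is a standard feature of the Harish-Chandra/Borel embedding, but it is worth stating carefully to justify the applicability of Lemma~\ref{LEMliftgend}, which requires both that $\phi$ be defined on all of $\C^n$ and that $\phi(0)=0$. Everything else reduces mechanically to transport of structure under an isometry.
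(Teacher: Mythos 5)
Your proposal is correct and follows essentially the same route as the paper: the paper's proof likewise combines the transitivity of the origin-fixing isotropy group on maximal-rank totally geodesic polydisks (the compact Polydisk Theorem) with Proposition~\ref{PROPPolyDual} and the lift of Lemma~\ref{LEMliftgend}. Your added remark that the isotropy transformations act linearly on the affine chart, so that Lemma~\ref{LEMliftgend} genuinely applies, is a useful explicit justification of a point the paper leaves implicit.
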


\begin{proof}
By the Polydisk Theorem (see \cite{WolfBook1969FineStructureHSSpolydisc}), the isotropy subgroup at the origin in the group of holomorphic isometries of \((\C^n,g_{\Omega}^*)\) acts transitively on the family of \(r\)-dimensional, totally geodesic polydisks through the origin.  Combining this transitivity with Proposition \ref{PROPPolyDual} and Lemma \ref{LEMliftgend} yields the desired result.
\end{proof}

\subsubsection*{Proof of Theorem \ref{THMhpolydual}}
Let \(X\in T_{(z_0,0)}\C^{n+1}\) be an arbitrary tangent vector, and decompose \(X=X_1+X_2\) with \(X_1\in T_{z_0}\C\) and \(X_2\in T_0\C^n\).  
By the dual  Polydisk Theorem (\cite{WolfBook1969FineStructureHSSpolydisc}), there exists a totally geodesic  dual polydisk
$$
j\colon (\Pi^*, g^*_{\Delta^r}) \rightarrow (\Omega^*, g^*_{\Omega})
$$
of rank $r$ passing through the origin such that \(X_2\in T_0\Pi^*\).
Noticing that
\[
X\in T_{(z_0,0)}(\C\times\Pi^*) = T_{(z_0,0)}C_{\Pi^*},
\]
we conclude the proof by taking as 
$$\widetilde{j}:\C^{r+1}\simeq \C\times\Pi^*\rightarrow \C^{n+1}, (z_0, z)\mapsto (z_0, j(z))$$
the inclusion provided by Proposition \ref{PROPPolydual2}.


\subsection{Proof of Theorem \ref{THMCORhpolydual}}

By Theorem \ref{THMhpolydual}, for any bounded symmetric domain \(\W\) of rank \(r\), there exists a holomorphically totally geodesic inclusion
\[
\; \bigl(\C^{s+1},\,g^*_{\Delta^s,\mu}\bigr)\;\hookrightarrow\;\bigl(\C^{n+1},\,g^*_{\W,\mu}\bigr),
\]
where one takes \(s=2\) if \(r\ge2\), and \(s=1\) otherwise.  A straightforward computation of the curvature tensor shows that:
\begin{enumerate}
  \item The sectional curvature of $\bigl(\C^{2},\,g^*_{\Delta^1,\mu}\bigr)$ at \((w, 0)\) in the plane spanned by \(\frac{\partial}{\partial x_1}\) and \(\frac{\partial}{\partial y_1}\) is
  \begin{equation}\label{EQnocptmuneq1}
    K\Bigl(\tfrac{\partial}{\partial x_1}\wedge \tfrac{\partial}{\partial y_1}\Bigr)(w, 0)
    \;=\;
    \frac{2 + 2(\mu-1)\,\lvert w\rvert^2}{\mu},
  \end{equation}
  where \(\frac{\partial}{\partial z_1}=\tfrac12\bigl(\tfrac{\partial}{\partial x_1}-i\,\tfrac{\partial}{\partial y_1}\bigr)\).
  
  \item The sectional curvature of $\bigl(\C^{3},\,g^*_{\Delta^2,\mu}\bigr)$ at \((w, 0,0)\) in the plane spanned by \(\frac{\partial}{\partial x_1}\) and \(\frac{\partial}{\partial x_2}\) is
  \begin{equation}\label{EQnocptmuneq2}
    K\Bigl(\tfrac{\partial}{\partial x_1}\wedge \tfrac{\partial}{\partial x_2}\Bigr)(w, 0,0)
    \;=\;
    -\tfrac{\lvert w\rvert^2}{2},
  \end{equation}
  where \(\frac{\partial}{\partial z_j}=\tfrac12\bigl(\tfrac{\partial}{\partial x_j}-i\,\tfrac{\partial}{\partial y_j}\bigr)\) for \(j=1,2\).
\end{enumerate}

Since any compact Riemannian manifold has bounded sectional curvature, equation \eqref{EQnocptmuneq1} precludes \(\bigl(\C^{n+1},g^*_{\C H^n,\mu}\bigr)\) from being a totally geodesic submanifold of a compact manifold whenever \(\mu\neq1\).  Similarly, equation \eqref{EQnocptmuneq2} shows that if \(\rank(\W)\ge2\), then \(\bigl(\C^{n+1},g^*_{\W,\mu}\bigr)\) cannot be realized as a totally geodesic submanifold of any compact Riemannian manifold.  This completes the proof.


\end{document}